\numberwithin{equation}{section}
\newtheorem{theorem}{Theorem}[section]
\newtheorem{corollary}[theorem]{Corollary}
\newtheorem{lemma}[theorem]{Lemma}
\theoremstyle{definition}
\newtheorem{remark}[theorem]{Remark}
\theoremstyle{definition}
\newtheorem{definition}[theorem]{Definition}
\theoremstyle{definition}
\def\dashint{\operatorname%
{\,\,\text{\bf-}\kern-.98em\DOTSI\intop\ilimits@\!\!}}
\def\\det{\text{det}}
\def\.5{\frac{1}{2}}
\newcommand{\RN}[1]{%
  \textup{\uppercase\expandafter{\romannumeral#1}}%
}
\renewcommand{\epsilon}{\varepsilon}
\newcounter{marnote}
\begin{document}
\title[Asymptotic analysis for elliptic systems ]{Asymptotic analysis for elliptic systems in a narrow region}
\author[Z.W. Zhao]{Zhiwen Zhao}

\address[Z.W. Zhao]{1. Beijing Computational Science Research Center, Beijing 100193, China.}
\address{2. School of Mathematical Sciences, Beijing Normal University, Beijing 100875, China.}
\email{zwzhao365@163.com}


\date{\today} 



\begin{abstract}
This paper is concerned with investigating the asymptotic behavior of the gradients of solutions to a class of elliptic systems with general boundary data, especially covering the Lam\'{e} systems, in a narrow region. The novelty of this paper lies in finding the correction term to improve the previous gradient estimates to be a precise asymptotic formula. Through this justification, we capture all singular terms and thus give a fairly sharp characterization in terms of the singularities of the gradients.
\end{abstract}

\maketitle

\noindent{\bf{Keywords}}: Asymptotic expansions; correction term; elliptic systems; composites.

\section{Introduction and main results}
This work is devoted to the investigation of high concentration phenomena appearing in fiber-reinforced composites, where a relatively large number of fibers are touching or close to touching. This high concentration will occur in the thin gaps between two adjacent fibers or the narrow regions between the fibers and the matrix boundary and its singularity can be quantitatively described by the distance between two fibers or between the fibers and the external boundary. The principal quantity of interest from an engineering point of view is the stress, which is the gradient of a solution to the linear systems of elasticity, also called the Lam\'{e} systems, see \cite{OSY1992}. For further generalizations, we investigate a class of elliptic systems. The objectives of this paper are two-fold: on one hand, by justifying the leading term used in \cite{JLX2019}, we establish an asymptotic formula of the gradient for the elliptic systems with no-zero boundary value data; on the other hand, when the boundary value data degenerates to zero, we extend the exponentially decaying estimation of the gradient for $2$-convex inclusions in \cite{LLBY2014} to that for more general $m$-convex inclusions with $m\geq2$. In addition, for the convenience of applications, we list the similar results for the Lam\'{e} systems.

Motivated by the well-known work of Babu\u{s}ka et al. \cite{BASL1999} on initiation and growth of damage in composites, where they observed numerically that the size of the strain tensor is bounded independently of the distance $\varepsilon$ between inclusions, there has been a long list of papers on gradient estimates for solutions of elliptic equations and systems with piecewise smooth coefficients. Li and Nirenberg \cite{LN2003} demonstrated the observation in \cite{BASL1999} and established the $C^{1,\alpha}$ estimates for solutions to general second order elliptic systems, including systems of linear elasticity, with piecewise H\"{o}lder continuous coefficients in all dimensions. The corresponding results for divergence form elliptic equations can be seen in \cite{BV2000,LV2000}. Notice that the dependence on the elliptic coefficients in the upper bounds of the gradients in \cite{LV2000,LN2003,BV2000} is not explicit. Recently Dong and Li \cite{DL2019} showed the clear dependence of the elliptic coefficients for a class of non-homogeneous elliptic equations with discontinuous coefficients in the presence of two circular fibers in two dimensions. However, for more general elliptic equations and systems and more general shape of inclusions, this problem is still unsolved. We draw the attention of readers to more open problems in page 94 of \cite{LV2000} and page 894 of \cite{LN2003}.

It is well known that for the scalar case, the antiplane shear model is consistent with the two-dimensional conductivity model. So, analysing and understanding the singularities of the electric filed has a valuable meaning in relation to failure analysis of composites. It has been proved that the electric field, which is the gradient of a solution to the perfect conductivity equation, blows up at the rate of $\varepsilon^{-1/2}$ in two dimensions \cite{AKLLL2007,BC1984,BLY2009,AKL2005,Y2007,Y2009,K1993}, $|\varepsilon\ln\varepsilon|^{-1}$ in three dimensions \cite{BLY2009,LY2009,BLY2010,L2012}, and $\varepsilon^{-1}$ in higher dimensions \cite{BLY2009}, respectively. Besides those upper and lower bound estimates of the concentrated field, many mathematicians used different methods to establish a precise asymptotic formula for the field, for example, \cite{KLY2013,ACKLY2013,KLY2014,LLY2019}. We here would like to point out that some techniques used in the scalar equation, such as the maximum principle, cannot apply to the elliptic systems. This fact prevented us from extending the results for scalar equations to systems of equation until Li, Li, Bao, Yin \cite{LLBY2014} developed a delicate iteration technique to overcome this difficulty and established $C^{k}$ estimates for a class of elliptic systems for every $k\geq0$. Thenceforward this iteration technique is always used to investigate the gradient blow-up for the Lam\'{e} system, see \cite{BLL2015,BLL2017,BJL2017,LZ2020} and references therein. It is worthwhile to mention that Kang and Yu \cite{KY2019} recently utilized the layer potential techniques and the variational principle to give a quantitative characterization of the stress and proved that its optimal blow-up rate is $\varepsilon^{-1/2}$ in two dimensions.

In addition, some literature, beginning with \cite{ADKL2007}, aim to reveal the effect of the boundary data on the stress blow-up. Ammari et al. \cite{ADKL2007} studied the perfect conductivity problem with zero boundary value data as follows:
\begin{align*}
\begin{cases}
\Delta u=0,&\hbox{in}\ \mathbb{R}^3\setminus\overline{B_1\cup B_2},\\
u=0, &\hbox{on}\ \partial B_1\cup \partial B_2,\\
u(x)-H(x)=O(|x|^{-1}), &\hbox{as}\ |x|\rightarrow+\infty,
\end{cases}
\end{align*}
where $B_1$ and $B_2$ are two balls in $\mathbb{R}^3$ and $H$ is a harmonic function in $\mathbb{R}^3$ satisfying $H(0)=0$. Their conclusion is that there exists a constant $C$ independent of $\varepsilon$ such that $$\|\nabla(u-H)\|_{L^\infty(\mathbb{R}^3\setminus\overline{B_1\cup B_2})}\leq C,$$
which implies that there is no blow-up appearing. This result was extended to a class of elliptic systems in all dimensions in \cite{LLBY2014} and was improved that $|\nabla u|$ decays exponentially fast near the origin. Ju, Li and Xu \cite{JLX2019} further obtained the optimal gradient estimates for the elliptic systems with general no-zero boundary data. Additionally, in the context of the Lam\'{e} systems with partially infinite coefficients, Li and Zhao \cite{LZ2020} found that some special boundary data of $k$-order growth can strengthen the singularities of the stress.

To formulate our problem, we first fix our domain. For the sake of convenience, we use superscript prime to denote $(n-1)$-dimensional domains and variables throughout the paper. For example, for a small positive constant $R_{0}$, independent of $\varepsilon$, we use $B_{2R_{0}}'$ to denote a ball in $\mathbb{R}^{n-1}$ centered at the origin $0'$ of radius $2R_{0}$. Let $h_{1}$ and $h_{2}$ be two smooth functions in $B_{2R_{0}}'$ such that for $m\geq2$,
\begin{enumerate}
{\it\item[(\bf{A1})]
$\kappa_{1}|x'|^{m}\leq h_{1}(x')-h_{2}(x')\leq \kappa_{2}|x'|^{m},\;\mbox{if}\;\,x'\in B'_{2R_{0}},$
\item[(\bf{A2})]
$|\nabla_{x'}^{j}h_{i}(x')|\leq \kappa_{3}|x'|^{m-j},\;\mbox{if}\;\,x'\in B_{2R_{0}}',\;i,j=1,2,$
\item[(\bf{A3})]
$\|h_{1}\|_{C^{2}(B'_{2R_{0}})}+\|h_{2}\|_{C^{2}(B'_{2R_{0}})}\leq \kappa_{4},$}
\end{enumerate}
where $\kappa_{i},i=1,2,3,4$, are four positive constants independent of $\varepsilon$. For $z'\in B'_{R_{0}},\,0<t\leq2R_{0}$, we define
\begin{align*}
\Omega_{t}(z'):=&\left\{x\in \mathbb{R}^{d}~\big|~h_{2}(x')<x_{n}<\varepsilon+h_{1}(x'),~|x'-z'|<{t}\right\}.
\end{align*}
We adopt the abbreviated notation $\Omega_{t}$ for the domain $\Omega_{t}(0')$. For $0<r\leq2R_{0}$, the top and bottom boundaries of $\Omega_r$ can be represented by
$$\Gamma_r^+=\{x\in \mathbb{R}^n\, |\, x_{n}=\varepsilon+h_1(x'), |x'|\leq r\},~ \Gamma_r^-=\{x\in \mathbb{R}^n \,|\, x_{n}=h_2(x'), |x'|\leq r\},$$
respectively.
\begin{figure}[htb]
\center{\includegraphics[width=0.45\textwidth]{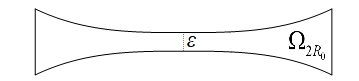}}
\caption{A narrow region}
\end{figure}

Let $u=(u^1,...,u^N)$ be a vector-valued function. In this paper, we consider the following boundary value problem in the narrow region (see Figure 1):
\begin{align}\label{eq1.1}
\begin{cases}
\partial_\alpha\left(A_{ij}^{\alpha\beta}(x)\partial_\beta u^j+B_{ij}^\alpha(x) u^j\right)+C_{ij}^\beta(x) \partial_\beta u^j+D_{ij}(x)u^j=0,\quad&
\hbox{in}\  \Omega_{2R_{0}},  \\
u=\varphi(x),\quad &\hbox{on}\ \Gamma_{2R_{0}}^+,\\
u=\psi(x), \quad&\hbox{on} \ \Gamma_{2R_{0}}^-,
\end{cases}
\end{align}
where $\varphi(x)=(\varphi^1(x), \varphi^2(x),..., \varphi^N(x))\in C^2(\Gamma_{2R_{0}}^+; \mathbb{R}^N),\;\psi(x)=(\psi^1(x),\;\psi^2(x),..., $
$\psi^N(x))\in C^2(\Gamma_{2R_{0}}^-; \mathbb{R}^N)$ are given vector-valued functions. Here and below the usual summation convention is used: $\alpha$ and $\beta$ are summed from 1 to $n$, while $i$ and $j$
are summed from 1 to $N$.

The coefficients  $A_{ij}^{\alpha\beta}(x)$ are measurable and bounded, that is,
\begin{align}\label{eq1.2}
|A_{ij}^{\alpha\beta}(x)| \leq \Lambda,\quad0<\Lambda<\infty,
\end{align}
and satisfy the weak ellipticity condition, that is,
there exists a constant $0<\lambda<\infty$ such that
\begin{align}\label{eq1.3}
\int_{\Omega_{2R_{0}}}A_{ij}^{\alpha\beta}(x)\partial_{\alpha}\xi^i\partial_{\beta}\xi^jdx\geq \lambda\int_{\Omega_{2R_{0}}}|\nabla\xi|^2dx,\quad \forall\ \xi\in H_0^1(\Omega_{2R_{0}}; \mathbb{R}^N ).
\end{align}
Moreover, when we fix $\alpha=\beta=n$, the matrix $(A_{ij}^{nn}(x))_{N\times N}$ satisfies the uniform elliptic condition:
\begin{align}\label{QDZAQ}
\Lambda_{1}|\eta|^{2}\leq A^{nn}_{ij}(x)\eta_{i}\eta_{j}\leq\Lambda_{2}|\eta|^{2}\quad\forall\;\eta\in\mathbb{R}^{N},
\end{align}
where $0<\Lambda_{1}\leq\Lambda_{2}<\infty$. In fact, in view of \eqref{eq1.2}, we may choose $\Lambda_{2}=N^{2}\Lambda$. With regard to the coefficients, we further assume that \begin{align}\label{ZKL987}\|A\|_{C^2(\Omega_{2R_{0}})}+\|B\|_{C^2(\Omega_{2R_{0}})}+\|C\|_{C^2(\Omega_{2R_{0}})}+\|D\|_{C^2(\Omega_{2R_{0}})}\leq \tau_{0},\end{align}
for some positive constant $\tau_{0}$.

Introduce a scalar auxiliary function $\bar{v}\in C^{2}(\mathbb{R}^{d})$ such that
\begin{align*}
\bar{v}(x',x_{n})=\frac{x_{n}-h_{2}(x')}{\delta(x')},\quad\mathrm{in}\;\Omega_{2R_{0}},
\end{align*}
where
\begin{align*}
\delta(x'):=\varepsilon+h_{1}(x')-h_{2}(x').
\end{align*}
For $(x',x_{n})\in\Omega_{2R_{0}}$, define
\begin{align}\label{OPQ}
\bar{u}(x',x_{n})=\varphi(x',\varepsilon+h_{1}(x'))\bar{v}+\psi(x',h_{2}(x'))(1-\bar{v})+\mathfrak{r}(\bar{v})\sum^{N}_{l=1}\mathcal{G}_{l},
\end{align}
where
\begin{align}\label{ACZZW01}
\mathfrak{r}(\bar{v}):=\frac{1}{2}\left(\bar{v}-\frac{1}{2}\right)^{2}-\frac{1}{8},
\end{align}
and for every $l=1,2,...,N$, $\mathcal{G}_{l}:=(g_{l}^{1},g_{l}^{2},...,g_{l}^{N})$ solves the following linear system of equations: for $i=1,2,...,N$,
\begin{align}\label{ZZW666}
A_{ij}^{nn}g_{l}^{j}=\Big(\sum^{n-1}_{\alpha=1}A_{il}^{\alpha n}\partial_{\alpha}\delta+\sum^{n-1}_{\beta=1}A_{il}^{n\beta}\partial_{\beta}\delta\Big)(\varphi^{l}(x',\varepsilon+h_{1}(x'))-\psi^{l}(x',h_{2}(x'))).
\end{align}
We would like to point out that hypothetical condition \eqref{QDZAQ} is critical to the determination of $\mathcal{G}_{l}$. In fact, it follows from the uniform elliptic condition \eqref{QDZAQ} that the matrix $(A^{nn}_{ij})_{N\times N}$ is positive definite and thus  invertible. Therefore, by using the Cramer's rule, we know that $\mathcal{G}_{l}$ is uniquely determined by the linear system of equations \eqref{ZZW666}.

\begin{definition}
We say that $u\in{H}^{1}(\Omega_{2R_{0}};\mathbb{R}^N)$ is a weak solution of problem (\ref{eq1.1}) if
$$\int_{\Omega_{2R_{0}}}\left(A_{ij}^{\alpha\beta}(x)\partial_\beta u^j+B_{ij}^\alpha(x) u^j\right)\partial_{\alpha}\zeta^{i}-C_{ij}^\beta(x) \partial_\beta u^j\zeta^{i}-D_{ij}(x)u^j\zeta^{i}dx=0,$$
for every vector-valued function $\zeta=(\zeta^{1},...,\zeta^{N})\in{C}_{c}^{\infty}(\Omega_{2R_{0}};\mathbb{R}^N)$, and hence for every $\zeta\in{H}_{0}^{1}(\Omega_{2R_{0}}; \mathbb{R}^N)$.
\end{definition}
For the convenience of presentation, we denote
\begin{align}\label{QWA019}
\Theta(\varphi,\psi)(x'):=&|\varphi(x',\varepsilon+h_{1}(x'))-\psi(x',h_{2}(x'))|\notag\\
&+|\nabla_{x'}(\varphi(x',\varepsilon+h_{1}(x'))-\psi(x',h_{2}(x')))|.
\end{align}

Throughout this paper, unless otherwise stated, we use $C$ to denote some positive constant, whose values may vary from line to line, which depends only on $n,N,\ \lambda,\ \Lambda, \ R_{0}, \ \tau_{0}, \ \kappa_{1},\ \kappa_{2},\ \kappa_{3},\ \kappa_{4}$, but not on $\varepsilon$. $O(1)$ denotes some quantity satisfying $|O(1)|\leq C$ for some $\varepsilon$-independent constant C.
\begin{theorem}\label{thm1.1}
Assume that hypotheses $\rm{(}${\bf{A1}}$\rm{)}$--$\rm{(}${\bf{A3}}$\rm{)}$ hold, and $\varphi\neq0$ on $\Gamma^{+}_{2R_{0}}$ or $\psi\neq0$ on $\Gamma^{-}_{2R_{0}}$. Let $u\in H^{1}(\Omega_{2R_{0}}; \mathbb{R}^N)$ be a weak solution of problem (\ref{eq1.1}). Then, for a sufficiently small $\varepsilon>0$, $x\in \Omega_{R_{0}}$,
\begin{align}\label{DZA001}
\nabla u=&\nabla\bar{u}+O(1)\Big(\Theta(\varphi,\psi)(x')+\delta\big(\|\varphi\|_{C^{2}(\Gamma^{+}_{2R_{0}})}+\|\psi\|_{C^{2}(\Gamma^{-}_{2R_{0}})}\big)\Big),
\end{align}
where the leading term $\bar{u}$ is given by \eqref{OPQ} and $\Theta(\varphi,\psi)(x')$ is defined by \eqref{QWA019}.
\end{theorem}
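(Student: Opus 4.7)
I would set $w:=u-\bar{u}$. Since $\mathfrak{r}(0)=\mathfrak{r}(1)=0$, the auxiliary function \eqref{OPQ} satisfies $\bar{u}=\varphi$ on $\Gamma_{2R_{0}}^{+}$ and $\bar{u}=\psi$ on $\Gamma_{2R_{0}}^{-}$, so $w$ vanishes on $\Gamma_{2R_{0}}^{\pm}$ and weakly solves
\begin{equation*}
\partial_{\alpha}\bigl(A_{ij}^{\alpha\beta}\partial_{\beta}w^{j}+B_{ij}^{\alpha}w^{j}\bigr)+C_{ij}^{\beta}\partial_{\beta}w^{j}+D_{ij}w^{j}=F_{i}(x)\quad\text{in }\Omega_{2R_{0}},
\end{equation*}
with $F_{i}$ equal to minus the operator applied to $\bar{u}^{i}$. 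The proof then splits into (i) a pointwise estimate for the residual $F$ and (ii) an energy iteration converting it into a pointwise bound on $\nabla w$.

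\textbf{Residual estimate.} For (i), the key observation is that $\bar{v}$ is linear in $x_{n}$, $\mathfrak{r}''\equiv 1$, and each $g_{l}^{j}(x')$ is independent of $x_{n}$, so that $\partial_{n}^{2}\bar{u}^{i}=\delta^{-2}\sum_{l}g_{l}^{i}$, while the linear interpolation part contributes nothing to $\partial_{n}^{2}$. On the other hand, the mixed partials of the interpolation contain the dangerous singular piece $-(\varphi^{j}-\psi^{j})\,\partial_{\alpha}\delta/\delta^{2}$ for $\alpha<n$. Collecting all $O(\delta^{-2})$ terms inside $A_{ij}^{\alpha\beta}\partial_{\alpha}\partial_{\beta}\bar{u}^{j}$, after relabeling the summed index $l$ and applying the defining identity \eqref{ZZW666}, these cancel exactly pointwise---this is precisely why $\mathcal{G}_{l}$ was chosen to solve \eqref{ZZW666}. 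The surviving residual then satisfies
\begin{equation*}
|F(x)|\leq C\Bigl(\delta^{-1}\Theta(\varphi,\psi)(x')+\|\varphi\|_{C^{2}(\Gamma_{2R_{0}}^{+})}+\|\psi\|_{C^{2}(\Gamma_{2R_{0}}^{-})}\Bigr),
\end{equation*}
where the first term comes from mixed partials carrying a single $1/\delta$, and the bounds $|\partial_{\alpha}\delta|\lesssim|x'|^{m-1}$ from (\textbf{A1})--(\textbf{A2}) together with \eqref{ZKL987} absorb the coefficient factors.

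\textbf{Iteration to $\nabla w$.} For (ii), I would adapt the local energy scheme of \cite{LLBY2014}. For $z'\in B'_{R_{0}}$, the region $\Omega_{\delta(z')}(z')$ is essentially a cube of side comparable to $\delta(z')$, and since $w$ vanishes on $\Gamma^{\pm}$, the one-dimensional Poincar\'e inequality gives $\|w\|_{L^{2}(\Omega_{t}(z'))}\lesssim\delta(z')\|\partial_{n}w\|_{L^{2}(\Omega_{t}(z'))}$. Testing the equation for $w$ against $w\eta^{2}$ with a cutoff in $x'$ and absorbing yields a Caccioppoli inequality on nested cylinders; iterating over a geometric sequence of radii upgrades $L^{2}$ control to $L^{\infty}$ control of $w$. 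A rescaling that sends the cube of side $\delta(z')$ to unit size converts the problem to a fixed-scale elliptic system with smooth coefficients and bounded forcing, and interior Schauder (or $W^{2,p}$) estimates then give
\begin{equation*}
|\nabla w(z)|\leq C\bigl(\Theta(\varphi,\psi)(z')+\delta(z')(\|\varphi\|_{C^{2}(\Gamma_{2R_{0}}^{+})}+\|\psi\|_{C^{2}(\Gamma_{2R_{0}}^{-})})\bigr),
\end{equation*}
which, combined with $\nabla u=\nabla\bar{u}+\nabla w$, is exactly \eqref{DZA001}.

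\textbf{Main obstacle.} The delicate step is the residual calculation: one must carefully separate $\delta^{-2}$ and $\delta^{-1}$ contributions inside $\partial_{\alpha}\bar{v}$, $\partial_{\alpha}\partial_{n}\bar{v}$, and $\partial_{\alpha}[\mathfrak{r}(\bar{v})\mathcal{G}_{l}]$, and verify that \eqref{ZZW666} annihilates \emph{every} $\delta^{-2}$ term. The full tensor structure of $A_{ij}^{\alpha\beta}$ enters here; the uniform ellipticity \eqref{QDZAQ} plays a role only in making $(A_{ij}^{nn})_{N\times N}$ invertible so that $\mathcal{G}_{l}$ is well defined. The energy iteration itself, while technical, follows the now-standard template for narrow-region elliptic systems and introduces no essentially new difficulty.
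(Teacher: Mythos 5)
Your proposal follows essentially the same route as the paper: the same correction term, the same exact cancellation of all $\delta^{-2}$ contributions via the defining system \eqref{ZZW666}, the same residual bound $|F|\leq C(\delta^{-1}\Theta+\|\varphi\|_{C^{2}}+\|\psi\|_{C^{2}})$, and the same Caccioppoli-iteration-plus-rescaling scheme of \cite{LLBY2014} (the paper merely organizes it componentwise via $u=\sum_{l}v_{l}$, which is cosmetic). One small imprecision: $g_{l}^{j}$ is not independent of $x_{n}$ since the coefficients $A_{ij}^{nn}(x)$, $A_{il}^{\alpha n}(x)$ depend on $x_{n}$, but the resulting extra terms $2A_{ij}^{nn}\partial_{n}\mathfrak{r}(\bar v)\partial_{n}g_{l}^{j}+\mathfrak{r}(\bar v)A_{ij}^{nn}\partial_{nn}g_{l}^{j}$ are of lower order and absorbed into the stated residual bound, so the argument stands.
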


\begin{remark}
It was seen in Lemma 2.3 of \cite{JLX2019} that $|\nabla(u-\varphi(x',\varepsilon+h_{1}(x'))\bar{v}-\psi(x',\varepsilon+h_{1}(x'))(1-\bar{v}))|\leq C|\varphi(x',\varepsilon+h_{1}(x'))-\psi(x',h_{2}(x'))|\varepsilon^{-1/2}$, which implies that $\nabla(\varphi(x',\varepsilon+h_{1}(x'))\bar{v}-\psi(x',\varepsilon+h_{1}(x'))(1-\bar{v}))$, the leading term used in \cite{JLX2019}, is not sufficiently precise. By contrast, we here capture its correction term $\nabla(\mathfrak{r}(\bar{v})\sum^{N}_{l=1}\mathcal{G}_{l})$, which makes the difference between $\nabla u$ and our corrected main term $\nabla\bar{u}$ be of order $O(1)$ in Theorem \ref{thm1.1}. Moreover, if $\Theta(\varphi,\psi)(0')=0$, then the remainder in \eqref{DZA001} becomes an infinitely small quantity of order $O(\varepsilon)$ in the shortest segment $\{(0',x_{n})|\,0<x_{n}<\varepsilon\}$.
\end{remark}

\begin{remark}
From the asymptotic \eqref{DZA001}, we see that the leading term $\nabla\bar{u}$ consists of two parts: one is higher order singular term $\nabla(\varphi(x',\varepsilon+h_{1}(x'))\bar{v}+\psi(x',h_{2}(x'))(1-\bar{v}))$ with its singularity of order $|\varphi(x',\varepsilon+h_{1}(x'))-\psi(x',h_{2}(x'))|\delta^{-1}$; the other part is the correction term $\nabla\big(\mathfrak{r}(\bar{v})\sum^{N}_{l=1}\mathcal{G}_{l}\big)$ and possesses the lower order singularity $|\varphi(x',\varepsilon+h_{1}(x'))-\psi(x',h_{2}(x'))|\delta^{-1/m}$. That is, the singularity of the gradient is determined by $|\partial_{n}\bar{u}|=|\varphi(x',\varepsilon+h_{1}(x'))-\psi(x',h_{2}(x'))|\delta^{-1}(1+O(1)\delta^{1-1/m})$. Then for $|x'|\leq2R_{0}$,

$(i)$ if $\varphi(x',\varepsilon+h_{1}(x'))-\psi(x',h_{2}(x'))=\mathbf{0}$, then \eqref{DZA001} becomes $\nabla u=\nabla (\varphi(x',\varepsilon+h_{1}(x')))+O(1)\delta\|\varphi\|_{C^{2}(\Gamma^{+}_{2R_{0}})}$, which indicates that $|\nabla u|$ remains bounded;

$(ii)$ if $\varphi(x',\varepsilon+h_{1}(x'))-\psi(x',h_{2}(x'))=\mathfrak{a}$ for some constant vector $\mathfrak{a}\neq\mathbf{0}$, then $|\nabla u|$ blows up at the rate of $\varepsilon^{-1}$ in the shortest segment $\{(0',x_{n})|\,0<x_{n}<\varepsilon\}$;

$(iii)$ if $\varphi(x',\varepsilon+h_{1}(x'))-\psi(x',h_{2}(x'))=(x_{1}^{k},0,...,0)$ for a positive integer $k\geq1$, then $|\nabla u|$ exhibits the singularity of order $O(\varepsilon^{k/m-1})$ at the point $x'=(\varepsilon^{1/m},0,...,0)$ provided that $m>k$.
\end{remark}

\begin{remark}
We would like to point out that our results can be extended to the case when the upper and lower boundaries of a narrow region are touching at one point and to a narrow region with partially flat upper and lower boundaries.
\end{remark}

When the boundary data degenerates to zero, that is, $\varphi=0$ on $\Gamma^{+}_{2R_{0}}$ and $\psi=0$ on $\Gamma^{-}_{2R_{0}}$, the elliptic systems \eqref{eq1.1} becomes
\begin{align}\label{eqation01}
\begin{cases}
\partial_\alpha\left(A_{ij}^{\alpha\beta}(x)\partial_\beta u^j+B_{ij}^\alpha(x) u^j\right)+C_{ij}^\beta(x) \partial_\beta u^j+D_{ij}(x)u^j=0,&
\hbox{in}\;\Omega_{2R_{0}},  \\
u=0,&\hbox{on}\;\Gamma_{2R_{0}}^{\pm}.
\end{cases}
\end{align}

Then, we have
\begin{theorem}\label{thm003}
Assume that hypotheses $\rm{(}${\bf{A1}}$\rm{)}$--$\rm{(}${\bf{A3}}$\rm{)}$ hold, $\varphi=0$ on $\Gamma^{+}_{2R_{0}}$ and $\psi=0$ on $\Gamma^{-}_{2R_{0}}$. Let $u\in H^{1}(\Omega_{2R_{0}}; \mathbb{R}^N)$ be a weak solution of problem (\ref{eqation01}). Then, for a sufficiently small $\varepsilon>0$, $x\in \Omega_{R_{0}}$,
\begin{align*}
\nabla u=O(1)\delta^{-\frac{n}{2}}e^{-\frac{1}{2C\delta^{1-1/m}}}\|u\|_{L^{2}(\Omega_{2R_{0}})}.
\end{align*}
In particular,
\begin{align*}
\max_{0<x_{n}<\varepsilon}|\nabla u(0',x_{n})|=O(1)\varepsilon^{-\frac{n}{2}}e^{-\frac{1}{2C\varepsilon^{1-1/m}}}\|u\|_{L^{2}(\Omega_{2R_{0}})}\rightarrow0,\quad\mathrm{as}\;\varepsilon\rightarrow0.
\end{align*}
\end{theorem}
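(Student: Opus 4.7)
The strategy is to adapt the telescoping energy iteration of Li, Li, Bao, and Yin \cite{LLBY2014}, which handled the strictly convex $m=2$ case, to general $m\ge 2$. Because both Dirichlet traces vanish on $\Gamma^{\pm}_{2R_{0}}$, $u$ satisfies a sharp one-dimensional Poincar\'{e} inequality in the thin $x_{n}$-direction; coupling this with a Caccioppoli inequality on a shrinking family of cylinders $\Omega_{t_{k}}(z')$ produces a geometric decay that, after $\sim\delta(z')^{-(1-1/m)}$ iterations, yields the claimed exponential factor $e^{-1/(2C\delta^{1-1/m})}$.

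Concretely, I would first derive the Caccioppoli estimate
\[
\int_{\Omega_{s}(z')}|\nabla u|^{2}\,dx \le \frac{C}{(t-s)^{2}}\int_{\Omega_{t}(z')}|u|^{2}\,dx,\qquad 0<s<t\le R_{0},
\]
by testing the weak form of \eqref{eqation01} against $\eta^{2}u$ with a radial cutoff $\eta$ that equals $1$ on $\Omega_{s}(z')$ and is supported in $\Omega_{t}(z')$, using \eqref{eq1.3} to control the principal part and \eqref{ZKL987} together with Cauchy's inequality to absorb the contributions of $B$, $C$, $D$. Since $u=0$ on both $\Gamma^{\pm}_{t}$, a one-dimensional Poincar\'{e} inequality in $x_{n}$ gives $\int_{\Omega_{t}(z')}|u|^{2}\,dx \le \bar{\delta}(z',t)^{2}\int_{\Omega_{t}(z')}|\partial_{n}u|^{2}\,dx$ with $\bar{\delta}(z',t):=\sup_{|x'-z'|<t}\delta(x')$, so combining produces the contractive step
\[
\int_{\Omega_{s}(z')}|\nabla u|^{2}\,dx \le \frac{C\,\bar{\delta}(z',t)^{2}}{(t-s)^{2}}\int_{\Omega_{t}(z')}|\nabla u|^{2}\,dx.
\]
Fix $z'\in B'_{R_{0}}$ and set $t_{0}:=c_{0}\,\delta(z')^{1/m}$; by hypothesis \textbf{(A1)}, $\delta(x')\sim \delta(z')$ uniformly for $|x'-z'|<t_{0}$, so $\bar{\delta}(z',t)\le C\delta(z')$ for $t\le t_{0}$. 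Choose a step $\eta\sim \delta(z')$ large enough that the contraction factor above is at most $1/2$, put $t_{k}:=t_{0}-k\eta$, and iterate. The chain remains inside $\Omega_{t_{0}}(z')$ provided $K\eta\le t_{0}/2$, that is, $K\lesssim \delta(z')^{-(1-1/m)}$; taking $K$ of this maximal order gives $2^{-K}\le e^{-1/(C\delta(z')^{1-1/m})}$, while a global Caccioppoli estimate between $\Omega_{R_{0}+t_{0}}$ and $\Omega_{2R_{0}}$ controls the starting quantity $\int_{\Omega_{t_{0}}(z')}|\nabla u|^{2}$ by $C\|u\|_{L^{2}(\Omega_{2R_{0}})}^{2}$.

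Finally, to turn this $L^{2}$ control of $\nabla u$ on $\Omega_{t_{K}}(z')$ into the pointwise bound at $x\in \Omega_{R_{0}}$, I would apply a local $L^{\infty}$--$L^{2}$ estimate for elliptic systems with $C^{2}$ coefficients on a ball of radius comparable to $\delta(x')$ around $x$; a rescaling argument (uniform thanks to \eqref{ZKL987}) delivers $|\nabla u(x)|\le C\delta(x')^{-n/2}\|\nabla u\|_{L^{2}(\Omega_{t_{K}}(x'))}$, producing the $\delta^{-n/2}$ prefactor. The special case $x=(0',x_{n})$ with $0<x_{n}<\varepsilon$ is then immediate since $\delta(0')=\varepsilon$. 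The main obstacle is the careful calibration of the three competing scales --- cylinder radius $t_{0}\sim\delta^{1/m}$ (forced by requiring $\delta$ to be essentially constant across the cylinder), iteration step $\eta\sim\delta$ (forced by the Caccioppoli--Poincar\'{e} contraction), and step count $K\sim\delta^{-(1-1/m)}$ (forced by the remaining room) --- so that a uniform geometric contraction holds along the full chain, and tracking the lower-order coefficient terms through all $K$ iterations without degrading that contraction constant; this is the point that genuinely extends the $m=2$ argument of \cite{LLBY2014}.
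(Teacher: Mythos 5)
Your proposal follows essentially the same route as the paper: a Caccioppoli inequality tested against $\eta^2 u$, the thin-direction Poincar\'{e} inequality exploiting $u=0$ on $\Gamma^{\pm}$, a contraction iterated $\sim\delta^{-(1-1/m)}$ times inside a cylinder of radius $\sim\delta^{1/m}$ on which $\delta(x')\sim\delta(z')$, a global energy bound (the paper's Lemma 3.1) to start the chain, and a rescaled $W^{2,p}$--Sobolev estimate on a $\delta$-sized box for the $\delta^{-n/2}$ pointwise factor. Your calibration of the three scales matches the paper's choices ($t_i=\delta+2c_1 i\delta$, $k\sim\delta^{-(1-1/m)}$), so the argument is correct and not materially different.
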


\begin{remark}
Assume further that $A_{ij}^{\alpha\beta}$, $B_{ij}^{\alpha}$, $C_{ij}^{\beta}$, $D_{ij}$, $h_{1}$ and $h_{2}$ belong to $C^{m}(\Omega_{2R_{0}})$ for $m\geq2$, that is,
\begin{align*}
\|A\|_{C^m(\Omega_{2R_{0}})}+\|B\|_{C^m(\Omega_{2R_{0}})}+\|C\|_{C^m(\Omega_{2R_{0}})}+\|D\|_{C^m(\Omega_{2R_{0}})}\leq \sigma_{m},
\end{align*}
and
\begin{align*}
\|h_{1}\|_{C^{m}(\Omega_{2R_{0}})}+\|h_{2}\|_{C^{m}(\Omega_{2R_{0}})}\leq\varsigma_{m},
\end{align*}
where $\sigma_{m}$ and $\varsigma_{m}$ are two positive constants. Then almost the same to the proof of Theorem \ref{thm003}, we can establish $C^{m}$ decay estimates for solutions of elliptic systems \eqref{eqation01} as follows:
\begin{align*}
|\nabla^{m}u(x)|\leq C\delta^{1-m-\frac{n}{2}}e^{-\frac{1}{2C\delta^{1-1/m}}}\|u\|_{L^{2}(\Omega_{2R_{0}})},\quad x\in\Omega_{R_{0}}.
\end{align*}

\end{remark}

The rest of this paper is organized as follows. By utilizing the corrected leading term introduced in \eqref{OPQ} and the iteration technique with respect to the energy developed in \cite{LLBY2014}, we give the proofs of Theorems \ref{thm1.1} and \ref{thm003} in Sections \ref{sec_2} and \ref{sc3}, respectively. A direct application in the Lam\'{e} systems is presented in Section \ref{sc56}.

\section{Proof of Theorem \ref{thm1.1}}\label{sec_2}

We decompose the solution of (\ref{eq1.1}) as follows:
\begin{align*}
u=v_1+v_2+\cdots+v_N,
\end{align*}
where $v_l=(v_l^1, v_l^2,..., v_l^N)$, $l=1,2,..., N$, with  $v_l^j=0$ for $j\neq l$, and $v_l$ satisfies the following boundary value problem
\begin{align}\label{eq_v2.1}
\begin{cases}
  \partial_\alpha\left(A_{ij}^{\alpha\beta}(x)\partial_\beta v_l^j+B_{ij}^\alpha(x) v_l^j\right)+C_{ij}^\beta(x) \partial_\beta v_l^j+D_{ij}(x)v_l^j=0,
&\hbox{in}\  \Omega_{2R_{0}},  \\
v_l=(0,..., 0, \varphi^l, 0,...,0),\ &\hbox{on}\ \Gamma_{2R_{0}}^+,\\
v_l=(0,..., 0, \psi^l, 0,...,0),&\hbox{on} \ \Gamma_{2R_{0}}^-.
\end{cases}
\end{align}
Then
\begin{equation}\label{split001}
\nabla{u}=\sum_{l=1}^{N}\nabla{v}_{l}.
\end{equation}
Define the basis of $\mathbb{R}^{N}$ as follows:
\begin{align*}
e_{l}=(0,...,0,1,0,...,0),\quad l=1,2,...,N.
\end{align*}
For $l=1,2,...,N$, denote
\begin{equation}\label{equ_tildeu}
\bar{u}_{l}(x)=\left(\varphi^{l}(x',\varepsilon+h_{1}(x'))\bar{v}(x)+\psi^{l}(x',h_{2}(x'))(1-\bar{v}(x))\right)e_{l}+\mathfrak{r}(\bar{v})\mathcal{G}_{l},
\end{equation}
where $\mathfrak{r}(\bar{v})$ and $\mathcal{G}_{l}$ are given by \eqref{ACZZW01}--\eqref{ZZW666}, respectively. Then, we see
\begin{align}\label{split002}
\bar{u}=\sum^{N}_{l=1}\bar{u}_{l}.
\end{align}
By a direct calculation, we obtain
\begin{align}\label{AZQ001}
|\nabla_{x'}\bar{v}(x)|\leq\frac{1}{\delta^{1/m}},\quad\partial_{n}\bar{v}(x)=\frac{1}{\delta},\quad\partial_{\alpha n}\bar{v}=-\frac{\partial_{\alpha}\delta}{\delta^{2}},\quad\alpha=1,...,n-1.
\end{align}
Thus,
\begin{align}
|\nabla_{x'}\bar{u}_l(x)|\leq&\frac{C|\varphi^{l}(x',\varepsilon+h_{1}(x'))-\psi^{l}(x',h_{2}(x'))|}{\delta^{1/m}}\notag\\
&+C(\|\varphi^{l}\|_{C^{1}}+\|\psi^{l}\|_{C^{1}}),\label{AZQ002}\\
|\partial_{n}\bar{u}_l(x)|\leq&\frac{C|\varphi^l(x',\varepsilon+h_{1}(x'))
-\psi^l(x',h_{2}(x'))|}{\delta},\label{AZQ003}
\end{align}
and
\begin{align}
|\nabla_{x'}^{2}\bar{u}_l(x)|\leq&\frac{C|\varphi^{l}(x',\varepsilon+h_{1}(x'))-\psi^{l}(x',h_{2}(x'))|}{\delta^{2/m}}+C(\|\varphi^{l}\|_{C^{2}}+\|\psi^{l}\|_{C^{2}})\notag\\
&+\frac{C(|\nabla_{x'}(\varphi^{l}(x',\varepsilon+h_{1}(x'))-\psi^{l}(x',h_{2}(x')))|}{\delta^{1/m}},\label{AZQ004}\\
\partial_{nn}\bar{u}_{l}=&\frac{\mathcal{G}_{l}}{\delta^{2}}+\mathfrak{r}(\bar{v})\partial_{nn}\mathcal{G}_{l}.\label{AZQ005}
\end{align}
Here and below, for $j=1,2$, we use $\|\varphi^{l}\|_{C^{j}}$ and $\|\psi^{l}\|_{C^{j}}$ to denote $\|\varphi^{l}\|_{C^{j}(\Gamma_{2R_{0}}^{+})}$ and $\|\psi^{l}\|_{C^{j}(\Gamma_{2R_{0}}^{-})}$, respectively.

Let $$w_l=v_l-\bar{u}_l,\qquad l=1,2,...,N.$$
Then $w_{l}$ satisfies
\begin{align}\label{eq2.6}
\begin{cases}
\partial_\alpha\big(A_{ij}^{\alpha\beta}(x)\partial_\beta w^j_{l}+B_{ij}^\alpha(x) w^j_{l}\big)+C_{ij}^\beta(x) \partial_\beta w^j_{l}+D_{ij}(x)w^j_{l}=f^{i}_{l},&
\hbox{in}\;\Omega_{2R_{0}},  \\
w_{l}=0,&\hbox{on}\;\Gamma_{2R_{0}}^\pm,
\end{cases}
\end{align}
where
\begin{align}\label{ZWPLN001}
f^{i}_{l}=&
-\left(\partial_{\alpha}A_{ij}^{\alpha\beta}(x)\partial_{\beta}\bar{u}_{l}^j+\partial_{\alpha}(B_{ij}^{\alpha}(x)\bar{u}_{l}^{j})+C_{ij}^{\beta}(x)\partial_\beta\bar{u}_{l}^{j}+D_{ij}(x)\bar{u}_{l}^{j}\right)-A_{ij}^{\alpha\beta}(x)\partial_{\alpha\beta}\bar{u}_{l}^j\notag\\
=&-\mathrm{I}-\mathrm{II}.
\end{align}
On one hand, from \eqref{ZKL987} and \eqref{AZQ002}--\eqref{AZQ003}, we have
\begin{align*}
|\mathrm{I}|\leq C|\varphi^l(x',\varepsilon+h_{1}(x'))
-\psi^l(x',h_{2}(x'))|\delta^{-1}+C(\|\varphi^{l}\|_{C^{1}}+\|\psi^{l}\|_{C^{1}}).
\end{align*}

On the other hand, recalling the definition of $\bar{u}_{l}$ in \eqref{equ_tildeu} and making use of \eqref{AZQ005}, we have
\begin{align*}
\mathrm{II}=&\sum^{n-1}_{\alpha,\beta=1}A_{ij}^{\alpha\beta}(x)\partial_{\alpha\beta}\bar{u}_{l}^j+\sum^{n-1}_{\alpha=1}A_{ij}^{\alpha n}(x)\partial_{\alpha n}\bar{u}_{l}^j+\sum^{n-1}_{\beta=1}A_{ij}^{n\beta}(x)\partial_{n\beta}\bar{u}_{l}^j+A_{ij}^{nn}\partial_{nn}\bar{u}_{l}^{j}\\
=&\Bigg(\sum^{n-1}_{\alpha,\beta=1}A_{ij}^{\alpha\beta}(x)\partial_{\alpha\beta}\bar{u}_{l}^j+\sum^{n-1}_{\alpha=1}A_{il}^{\alpha n}\frac{\partial_{\alpha}(\varphi^{l}(x',\varepsilon+h_{1}(x'))-\psi^{l}(x',h_{2}(x')))}{\delta}\notag\\
&+\sum^{n-1}_{\alpha=1}A_{ij}^{\alpha n}\partial_{\alpha n}(\mathfrak{r}(\bar{v})g_{l}^{j})+\sum^{n-1}_{\beta=1}A_{il}^{n\beta}\frac{\partial_{\beta}(\varphi^{l}(x',\varepsilon+h_{1}(x'))-\psi^{l}(x',h_{2}(x')))}{\delta}\notag\\
&+\sum^{n-1}_{\beta=1}A_{ij}^{n\beta}\partial_{n\beta}(\mathfrak{r}(\bar{v})g_{l}^{j})+2A_{ij}^{nn}\partial_{n}\mathfrak{r}(\bar{v})\partial_{n}g_{l}^{j}+\mathfrak{r}(\bar{v})A_{ij}^{nn}\partial_{nn}g_{l}^{j}\Bigg)\notag\\
&+\frac{1}{\delta^{2}}\Bigg(A_{ij}^{nn}g_{l}^{j}-\bigg(\sum^{n-1}_{\alpha=1}A_{il}^{\alpha n}\partial_{\alpha}\delta+\sum^{n-1}_{\beta=1}A_{il}^{n\beta}\partial_{\beta}\delta\bigg)\big(\varphi^{l}(x',\varepsilon+h_{1}(x'))-\psi^{l}(x',h_{2}(x'))\big)\Bigg)\\
=&\mathrm{II}_{1}+\mathrm{II}_{2}.
\end{align*}
Using \eqref{AZQ001}--\eqref{AZQ004}, we obtain
\begin{align*}
|\mathrm{II}_{1}|\leq&C|\nabla_{x'}(\varphi^{l}(x',\varepsilon+h_{1}(x'))-\psi^{l}(x',h_{2}(x')))|\delta^{-1}+C(\|\varphi^{l}\|_{C^{2}}+\|\psi^{l}\|_{C^{2}})\notag\\
&+C|\varphi^{l}(x',\varepsilon+h_{1}(x'))-\psi^{l}(x',h_{2}(x'))|\delta^{-2/m},
\end{align*}
while, in view of \eqref{ZZW666}, we have
\begin{align*}
\mathrm{II}_{2}=0.
\end{align*}
For simplicity, we also use the notation $\Theta(\varphi,\psi)(x')$ introduced in \eqref{QWA019} to denote
\begin{align*}
|\varphi^{l}(x',\varepsilon+h_{1}(x'))-\psi^{l}(x',h_{2}(x'))|+|\nabla_{x'}(\varphi^{l}(x',\varepsilon+h_{1}(x'))-\psi^{l}(x',h_{2}(x')))|.
\end{align*}
Then
\begin{align}\label{KHQ010}
|f^{i}_{l}|\leq&C\Theta(\varphi,\psi)(x')\delta^{-1}+C(\|\varphi^{l}\|_{C^{2}}+\|\psi^{l}\|_{C^{2}}).
\end{align}

\begin{lemma}\label{lem2.1}
Let $v_l\in H^1(\Omega_{2R_{0}}; \mathbb{R}^N)$ be a weak solution of (\ref{eq_v2.1}), then for $l=1,2,...,N$,
\begin{align}\label{lem2.2equ}
\int_{\Omega_{R_{0}}}|\nabla w_l|^2dx\leq C\big(\|w_l\|^2_{L^2(\Omega_{2R_{0}})}+\|\varphi^{l}\|_{C^{2}(\Gamma_{2R_{0}}^{+})}^{2}
+\|\psi^{l}\|_{C^{2}(\Gamma_{2R_{0}}^{-})}^{2}\big),
\end{align}
where $C$ depends on $n$, $\lambda$, $\kappa_i$, $i=0,1,2,3,4$, but not on $\varepsilon$.
\end{lemma}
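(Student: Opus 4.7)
The plan is to establish \eqref{lem2.2equ} as a Caccioppoli-type energy inequality for the inhomogeneous system \eqref{eq2.6}. First I would choose a smooth cutoff $\eta=\eta(x')\in C^{\infty}_{c}(B'_{2R_{0}})$ with $\eta\equiv 1$ on $B'_{R_{0}}$ and $|\nabla_{x'}\eta|\leq C$. Since $w_{l}$ vanishes on $\Gamma^{\pm}_{2R_{0}}$ and $\eta$ vanishes near the lateral boundary, $\eta^{2}w_{l}$ belongs to $H^{1}_{0}(\Omega_{2R_{0}};\mathbb{R}^{N})$ and is admissible in the weak form of \eqref{eq2.6}. Inserting it, expanding $\partial_{\alpha}(\eta^{2}w_{l}^{i})=\eta^{2}\partial_{\alpha}w_{l}^{i}+2\eta w_{l}^{i}\partial_{\alpha}\eta$, applying the global ellipticity \eqref{eq1.3} to the function $\eta w_{l}\in H^{1}_{0}$ together with the algebraic identity $\eta^{2}A^{\alpha\beta}_{ij}\partial_{\alpha}w_{l}^{i}\partial_{\beta}w_{l}^{j}=A^{\alpha\beta}_{ij}\partial_{\alpha}(\eta w_{l}^{i})\partial_{\beta}(\eta w_{l}^{j})+(\text{cross terms in }\nabla\eta)$, and absorbing the lower-order $B,C,D$ contributions via \eqref{ZKL987} and Young's inequality, one arrives at the preliminary bound
\begin{align*}
\int_{\Omega_{2R_{0}}}\eta^{2}|\nabla w_{l}|^{2}\,dx\leq C\,\|w_{l}\|^{2}_{L^{2}(\Omega_{2R_{0}})}+C\,\Big|\int_{\Omega_{2R_{0}}}f_{l}^{i}\,\eta^{2}w_{l}^{i}\,dx\Big|.
\end{align*}

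The main obstacle is the forcing term, because the pointwise estimate \eqref{KHQ010} shows $|f_{l}^{i}|$ contains the singular piece $C\Theta(\varphi,\psi)(x')\delta^{-1}$, which fails to lie in $L^{2}(\Omega_{2R_{0}})$ uniformly in $\varepsilon$. The decisive observation is that $w_{l}$ vanishes on $\Gamma^{\pm}_{2R_{0}}$, so the one-dimensional Poincar\'e inequality applied on each vertical slice $\{h_{2}(x')<x_{n}<\varepsilon+h_{1}(x')\}$ of length $\delta(x')$ yields
\begin{align*}
\int_{h_{2}(x')}^{\varepsilon+h_{1}(x')}|w_{l}(x',x_{n})|\,dx_{n}\leq C\,\delta(x')^{3/2}\Big(\int_{h_{2}(x')}^{\varepsilon+h_{1}(x')}|\partial_{n}w_{l}(x',x_{n})|^{2}\,dx_{n}\Big)^{1/2},
\end{align*}
producing exactly the $\delta^{3/2}$-factor needed to cancel the $\delta^{-1}$-singularity. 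Splitting $\eta^{2}=\eta\cdot\eta$ inside the Cauchy-Schwarz in the $x'$-integration, and using the $\varepsilon$-uniform bound $\int_{B'_{2R_{0}}}\delta(x')\,dx'\leq C$ (which follows from $\delta\leq\varepsilon+\kappa_{2}|x'|^{m}$), one obtains
\begin{align*}
\Big|\int_{\Omega_{2R_{0}}}\Theta(\varphi,\psi)(x')\delta^{-1}\eta^{2}|w_{l}|\,dx\Big|\leq C(\|\varphi^{l}\|_{C^{1}}+\|\psi^{l}\|_{C^{1}})\,\|\eta\,\partial_{n}w_{l}\|_{L^{2}(\Omega_{2R_{0}})},
\end{align*}
so that Young's inequality absorbs $\|\eta\,\partial_{n}w_{l}\|^{2}_{L^{2}}\leq\int\eta^{2}|\nabla w_{l}|^{2}$ into the left-hand side and leaves the $\varepsilon$-independent constant $C(\|\varphi^{l}\|^{2}_{C^{1}}+\|\psi^{l}\|^{2}_{C^{1}})$.

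The remaining, non-singular portion of $f_{l}^{i}$, bounded pointwise by $C(\|\varphi^{l}\|_{C^{2}}+\|\psi^{l}\|_{C^{2}})$, is handled by a direct Cauchy-Schwarz and Young inequality, contributing at most $\|w_{l}\|^{2}_{L^{2}(\Omega_{2R_{0}})}+C(\|\varphi^{l}\|^{2}_{C^{2}}+\|\psi^{l}\|^{2}_{C^{2}})$. Combining the two parts and using $\eta\equiv 1$ on $B'_{R_{0}}$ then produces \eqref{lem2.2equ}. The essential difficulty throughout is precisely the $\delta^{-1}$-blow-up of the forcing, which prevents any naive $\|f_{l}\|_{L^{2}}\|w_{l}\|_{L^{2}}$ pairing from being $\varepsilon$-uniform; the anisotropic Poincar\'e inequality in the thin $x_{n}$-direction is exactly the ingredient that compensates and renders all constants independent of $\varepsilon$.
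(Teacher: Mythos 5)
Your proposal is correct, and the decisive step coincides with the paper's: both arguments exploit $w_l=0$ on $\Gamma^{\pm}_{2R_0}$ to trade the non-square-integrable singularity $\Theta(\varphi,\psi)(x')\,\delta^{-1}$ in $f_l$ for a factor of $\nabla w_l$ via a one-dimensional Poincar\'e/fundamental-theorem argument on each vertical fiber of length $\delta(x')$ (the paper uses the $L^1$ version $\int_{h_2}^{\varepsilon+h_1}|w_l|\,dx_n\leq\delta\int|\partial_n w_l|\,dx_n$ and then Cauchy--Schwarz over $\Omega_{R_0}$; you use the $L^2$ version with the $\delta^{3/2}$ gain and the bound $\int_{B'_{2R_0}}\delta\,dx'\leq C$ — both are valid and yield the same absorbable term). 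Where you genuinely diverge is the treatment of the lateral boundary: the paper tests directly with $w_l$ on $\Omega_{R_0}$ and must then control the resulting surface integral of $|w_l|^2+|\nabla w_l|^2$ over $\{|x'|=R_0\}$, which it does by invoking Sobolev embedding and interior/boundary $W^{2,p}$ estimates in the annular region $\Omega_{4/3R_0}\setminus\overline{\Omega_{2/3R_0}}$ where the gap is bounded below; your Caccioppoli cutoff $\eta(x')$ eliminates that surface term altogether, replacing it by the harmless cross terms $\int|w_l|^2|\nabla\eta|^2\leq C\|w_l\|^2_{L^2(\Omega_{2R_0})}$, and is therefore somewhat more elementary (no $W^{2,p}$ theory needed at this stage), at the cost of nothing — indeed it is the same device the paper itself deploys one lemma later for the localized estimate on $\Omega_s(z')$. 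One small point to make explicit if you write this up: $\eta^2 w_l\in H^1_0(\Omega_{2R_0};\mathbb{R}^N)$ is an admissible test function, and the coercivity \eqref{eq1.3} must be applied to $\eta w_l$ (not to $w_l$ itself), with the discrepancy between $\int|\nabla(\eta w_l)|^2$ and $\int\eta^2|\nabla w_l|^2$ again absorbed into $\|w_l\|^2_{L^2(\Omega_{2R_0})}$.
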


\begin{proof} For simplicity, denote
\begin{align}\label{nota001}
w:=w_l,\;\;\bar{u}:=\bar{u}_l,\;\,\varphi:=\varphi^l,\;\,\psi:=\psi^{l}\;\,\mathrm{and}\;\,f^{i}:=f^{i}_{l}.
\end{align}
Then,
\begin{align*}
f^{i}
=&-\partial_\alpha \left(A_{ij}^{\alpha\beta}(x)\partial_\beta \bar{u}^j+B_{ij}^\alpha(x) \bar{u}^j+C_{ij}^\alpha(x) \bar{u}^j\right)\nonumber\\
&+\partial_\beta (C_{ij}^\beta(x))\bar{u}^j-D_{ij}(x)\bar{u}^j.
\end{align*}
Multiplying the equation in (\ref{eq2.6}) by $w$ and applying integration by parts in $\Omega_{R_{0}}$, we have
\begin{align*}
&\int_{\Omega_{R_{0}}}A_{ij}^{\alpha\beta}(x)\partial_\beta w^j\partial_\alpha w^idx
\\
=&-\int_{\Omega_{R_{0}}}B_{ij}^{\alpha}(x) w^j\partial_\alpha w^idx
+\int_{\Omega_{R_{0}}}C_{ij}^{\beta}(x)\partial_\beta w^j w^idx+\int_{\Omega_{R_{0}}}D_{ij}(x) w^jw^idx\\
&-\int_{\Omega_{R_{0}}}f^{i}w^idx+\int\limits_{\scriptstyle |x'|={R_{0}},\atop\scriptstyle
h_2(x')<x_{n}<\varepsilon+h_1(x')\hfill}\left(A_{ij}^{\alpha\beta}(x)\partial_\beta w^j+B_{ij}^\alpha(x) w^j\right)w^{i}\frac{x_{\alpha}}{R_{0}}dS.
\end{align*}
Using the weak ellipticity condition and the Cauchy inequality, we obtain
\begin{align}\label{equ1}
\lambda\int_{\Omega_{R_{0}}}|\nabla w|^2dx
\leq&\,\int_{\Omega_{R_{0}}}A_{ij}^{\alpha\beta}(x)\partial_\beta w^j\partial_\alpha w^idx\nonumber\\
\leq&\,\frac{\lambda}{4}\int_{\Omega_{R_{0}}}|\nabla w|^2dx+C \int_{\Omega_{R_{0}}}|w|^2dx+\left|\int_{\Omega_{R_{0}}}f^{i}w^idx\right|\nonumber\\
&+\int\limits_{\scriptstyle |x'|={R_{0}},\atop\scriptstyle
h_2(x')<x_{n}<\varepsilon+h_1(x')\hfill}C\left(|\nabla w|^2+|w|^2\right)dS.
\end{align}
Note that $w=0$ on $\Gamma_{2R_{0}}^{\pm}$ and $\overline{\Omega_{4/3R_{0}}}\setminus \Omega_{2/3R_{0}}\subset\left((\Omega_{2R_{0}}\setminus\overline{\Omega_{1/2R_{0}}})\cup (\Gamma_{2R_{0}}^{\pm}\setminus \Gamma_{1/2R_{0}}^{\pm})\right)$,
 by using  the Sobolev embedding theorem and classical $W^{2, p}$ estimates for elliptic systems, 
 we have, for some $p>n$,
\begin{align*}
\|\nabla w\|_{L^\infty(\Omega_{4/3R_{0}}\setminus \overline{\Omega_{2/3R_{0}}})}&\leq C\|w\|_{W^{2,p}(\Omega_{4/3R_{0}}\setminus \overline{\Omega_{2/3R_{0}}})}\\
&\leq C\big(\|w\|_{L^2(\Omega_{2R_{0}}\setminus \overline{\Omega_{1/2R_{0}}})}+\|f\|_{L^\infty(\Omega_{2R_{0}}\setminus \overline{\Omega_{1/2R_{0}}})}\big)\nonumber\\
&\leq C\big(\|w\|_{L^2(\Omega_{2R_{0}})}+\|\varphi\|_{C^2(\Gamma_{2R_{0}}^{+})}+\|\psi\|_{C^2(\Gamma_{2R_{0}}^{-})}\big),
\end{align*}
and for $x=(x',x_n)\in\Omega_{4/3R_{0}}\setminus \overline{\Omega_{2/3R_{0}}}$,
\begin{align*}
|w(x',x_n)|&=|w(x',x_n)-w(x',h_2(x'))|\\&\leq C\delta(x')\|\nabla w\|_{L^\infty(\Omega_{4/3R_{0}}\setminus  \overline{\Omega_{2/3R_{0}}})}\\
&\leq C\big(\|w\|_{L^2(\Omega_{2R_{0}})}+\|\varphi\|_{C^2(\Gamma_{2R_{0}}^{+})}+\|\psi\|_{C^2(\Gamma_{2R_{0}}^{-})}\big).
\end{align*}
In particular, this implies that
\begin{align}\label{w_on|x'|=r}
&\int\limits_{\scriptstyle |x'|={R_{0}},\atop\scriptstyle
h_2(x')<x_{n}<\varepsilon+h_1(x')\hfill}\big(| w|^{2}+|\nabla w|^{2}\big)dS\notag\\
&\leq C\big(\|w\|^2_{L^2(\Omega_{2R_{0}})}+\|\varphi\|^2_{C^2(\Gamma_{2R_{0}}^+)}+\|\psi\|^{2}_{C^2(\Gamma_{2R_{0}}^{-})}\big),
\end{align}
where $C$ depends only on $n,\,R_{0}$ and $\kappa_i$, $i=1,2,3,4$, but not on $\varepsilon$.

Then making use of \eqref{KHQ010} and the Cauchy's inequality, we have
\begin{align}\label{LFE01}
\left|\int_{\Omega_{R_{0}}}f^{i}w^idx\right|\leq&C\int_{\Omega_{R_{0}}}\frac{\Theta(\varphi,\psi)(x')}{\delta(x')}|w^{i}|dx+C(\|\varphi\|_{C^{2}}+\|\psi\|_{C^{2}})\int_{\Omega_{R_{0}}}|w^{i}|dx\notag\\
=&C\int_{|x'|<R_{0}/2}\frac{\Theta(\varphi,\psi)(x')}{\delta(x')}dx'\int^{\varepsilon+h_{1}}_{h_{2}}\left|\int^{x_{n}}_{h_{2}}\partial_{n}\omega^{i}(x',t)dt\right|dx_{n}\notag\\
&+C(\|\varphi\|_{C^{2}}+\|\psi\|_{C^{2}})\int_{\Omega_{R_{0}}}|w^{i}|dx\notag\\
\leq&C\int_{|x'|<R_{0}/2}\Theta(\varphi,\psi)(x')\left|\int^{x_{n}}_{h_{2}}\partial_{n}\omega^{i}(x',t)dt\right|dx'\notag\\
&+C(\|\varphi\|_{C^{2}}+\|\psi\|_{C^{2}})\int_{\Omega_{R_{0}}}|w^{i}|dx\notag\\
\leq&C(\|\varphi\|_{C^{1}}+\|\psi\|_{C^{1}})\int_{\Omega_{R_{0}}}|\nabla w|dx+C(\|\varphi\|_{C^{2}}+\|\psi\|_{C^{2}})\int_{\Omega_{R_{0}}}|w^{i}|dx\notag\\
\leq&\frac{\lambda}{4}\|\nabla w\|^{2}_{L^{2}(\Omega_{R_{0}})}+C\big(\|w\|^{2}_{L^{2}(\Omega_{R_{0}})}+\|\varphi\|^{2}_{C^{2}}+\|\psi\|^{2}_{C^{2}}\big).
\end{align}
Substituting \eqref{w_on|x'|=r}--\eqref{LFE01} into (\ref{equ1}), we see
\begin{align*}
\int_{\Omega_{R_{0}}}|\nabla w|^2dx\leq C\left(\|w\|^2_{L^2(\Omega_{2R_{0}})}+\|\varphi\|_{C^{2}(\Gamma_{2R_{0}}^{+})}^{2}+\|\psi\|_{C^{2}(\Gamma_{2R_{0}}^{-})}^{2}\right).
\end{align*}
The proof is complete.
\end{proof}

\begin{lemma}\label{lem2.2}
Assume as above. Then for a sufficiently small $\varepsilon>0$, $|z'|\leq R_{0}$, $l=1,2,...,N$,
\begin{align}\label{AVR001}
\int_{\Omega_{\delta}(z')}|\nabla w_{l}|^{2}&\leq C\delta^{n}\left(\Theta^{2}(\varphi,\psi)(z')+\delta^{2}\big(\|\varphi^{l}\|^{2}_{C^{2}(\Gamma^{+}_{2R_{0}})}+\|\psi^{l}\|^{2}_{C^{2}(\Gamma^{-}_{2R_{0}})}\big)\right).
\end{align}

\end{lemma}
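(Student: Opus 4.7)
The plan is to derive a Caccioppoli-type reverse inequality on nested domains $\Omega_t(z') \subset \Omega_{t+s}(z')$ for $\delta \leq t < t+s \leq c\,\delta(z')^{1/m}$ (the ``safe radius'' within which $\delta(x') \sim \delta(z')$), and then iterate it to contract the energy down to the required $O(\delta^n)$ bound. Abbreviate $\delta := \delta(z')$, $F(t) := \int_{\Omega_t(z')}|\nabla w_l|^2\,dx$, and $M := \Theta^2(\varphi,\psi)(z') + \delta^2(\|\varphi^l\|^2_{C^2}+\|\psi^l\|^2_{C^2})$; the goal is $F(\delta) \leq C\delta^n M$.

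First I would fix a standard cutoff $\eta \in C^\infty_c(\Omega_{t+s}(z'))$ with $\eta \equiv 1$ on $\Omega_t(z')$ and $|\nabla\eta|\leq C/s$, test equation \eqref{eq2.6} against $\eta^2 w_l$, and use the weak ellipticity \eqref{eq1.3} together with Cauchy's inequality (to absorb the cross terms $\int 2\eta\nabla\eta\,A\nabla w_l\cdot w_l$) to obtain $F(t) \leq (C/s^2)\|w_l\|^2_{L^2(\Omega_{t+s})} + C\,\bigl|\int_{\Omega_{t+s}} f_l\cdot w_l\,dx\bigr| + \text{(lower order)}$. Since $w_l$ vanishes on $\Gamma^\pm_{2R_0}$, the identity $w_l(x',x_n) = \int_{h_2(x')}^{x_n}\partial_n w_l\,d\tau$ and Cauchy--Schwarz yield the one-dimensional Poincar\'e bound $\|w_l\|^2_{L^2(\Omega_{t+s})} \leq C\delta^2\,F(t+s)$, the comparison $\delta(x') \sim \delta$ being uniform on $\Omega_{t+s}(z')$ thanks to the safe-radius constraint $t+s \leq c\delta^{1/m}$.

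For the source integral, I would combine the pointwise bound \eqref{KHQ010}, the Lipschitz control $\Theta(\varphi,\psi)(x') \leq \Theta(\varphi,\psi)(z') + C(t+s)(\|\varphi^l\|_{C^2}+\|\psi^l\|_{C^2})$, and the volume estimate $|\Omega_{t+s}(z')| \leq C(t+s)^{n-1}\delta$ to deduce $\|f_l\|^2_{L^2(\Omega_{t+s})} \leq C(t+s)^{n-1}\bigl[\Theta^2(z')/\delta + ((t+s)^2/\delta + \delta)(\|\varphi^l\|^2_{C^2}+\|\psi^l\|^2_{C^2})\bigr]$; Cauchy with a small parameter followed by the Poincar\'e step above then gives $\bigl|\int f_l\cdot w_l\,dx\bigr| \leq \tfrac12 F(t+s) + C(t+s)^{n-1}\delta\bigl(1+(t+s)^2/\delta^2\bigr)M$. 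Substituting back yields the recurrence $F(t) \leq \bigl(C_*\delta^2/s^2 + \tfrac12\bigr)F(t+s) + C(t+s)^{n-1}\delta\bigl(1+(t+s)^2/\delta^2\bigr)M$, and choosing $s = K\delta$ with $K$ a large absolute constant makes the multiplicative factor at most $3/4$.

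Finally I would iterate the recurrence along $t_i = \delta + iK\delta$ until $t_k \sim \delta^{1/m}$, which requires $k \sim \delta^{1/m-1}/K$ steps. The weighted sum of source contributions is bounded by $C\delta^n M$ because $\sum_{i\geq 0}(3/4)^i(1+iK)^{n+1}$ converges, while the residual $(3/4)^k F(t_k)$ is rendered negligible against the target $\delta^n M$ through the superpolynomially small factor $(3/4)^k$ combined with the $\varepsilon$-independent control of $F(t_k)$ afforded by Lemma \ref{lem2.1}. The main obstacle is securing the sharp $\delta^n$ power on the right of \eqref{AVR001}: this hinges on the full-strength Poincar\'e $\|w_l\|^2_{L^2} \leq C\delta^2\|\partial_n w_l\|^2_{L^2}$ (available only because $w_l=0$ on $\Gamma^\pm_{2R_0}$), the tight comparison $\delta(x')\sim\delta$ valid exactly within the safe radius $\delta^{1/m}$, and the careful bookkeeping that trades polynomial source growth against geometric contraction over the many iterations required when $m>1$.
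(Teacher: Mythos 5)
Your proposal is correct and follows essentially the same route as the paper: a Caccioppoli inequality from testing against $\eta^2 w_l$, the thin-direction Poincar\'e bound $\|w_l\|^2_{L^2}\leq C\delta^2\|\nabla w_l\|^2_{L^2}$ valid because $\delta(x')\sim\delta(z')$ within the safe radius $\sim\delta^{1/m}$, the source bound from \eqref{KHQ010}, and then $\sim\delta^{1/m-1}$ iterations with step $\sim\delta$ terminated by Lemma \ref{lem2.1}. The only cosmetic difference is that you absorb part of the source term into the contraction factor (giving $C_*\delta^2/s^2+\tfrac12$) while the paper keeps it purely additive; both yield the same $\delta^n$ bound after summing the geometric series.
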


\begin{proof}
For the sake of convenience, we still adopt the abbreviated notations as in \eqref{nota001}. For $0<t<s<1$,  let $\eta(x')$ be a smooth cutoff function such that $0\leq \eta(x')\leq1$, $\eta(x')=1$ if $|x'-z'|<t$, $\eta(x')=0$ if $|x'-z'|>s$ and $|\nabla\eta(x')|\leq\frac{2}{s-t}$. Multiplying the equation in (\ref{eq2.6}) by $\eta^2w^{i}$ and utilizing integration by parts, we obtain
\begin{align*}
\int_{\Omega_{s}(z')}f^{i}\eta^2w^idx=&-\int_{\Omega_{s}(z')}(A_{ij}^{\alpha\beta}(x)\partial_\beta w^j+B_{ij}^{\alpha}(x)w^j)\partial_\alpha(\eta^2w^i)dx\\
&+\int_{\Omega_{s}(z')}C_{ij}^{\beta}(x)\partial_\beta w^j\eta^2w^idx+\int_{\Omega_{s}(z')}D_{ij}(x) w^j\eta^2w^idx.
\end{align*}
Since\begin{align*}&\int_{\Omega_{s}(z')}(A_{ij}^{\alpha\beta}(x)\partial_\beta w^j+B_{ij}^{\alpha}(x)w^j)\partial_\alpha(\eta^2w^i)dx\\
=&\,\int_{\Omega_{s}(z')}A_{ij}^{\alpha\beta}(x)\partial_\beta(\eta w^j)\partial_\alpha(\eta w^i)dx
-\int_{\Omega_{s}(z')}A_{ij}^{\alpha\beta}(x)(\partial_\beta\eta w^j)\partial_\alpha(\eta w^i)dx\\
&+\int_{\Omega_{s}(z')}A_{ij}^{\alpha\beta}(x)\partial_\beta(\eta w^j)(\partial_\alpha\eta w^i)dx-\int_{\Omega_{s}(z')}A_{ij}^{\alpha\beta}(x)(\partial_\beta\eta w^j)(\partial_\alpha\eta w^i)dx\\
&+\int_{\Omega_{s}(z')}B_{ij}^{\alpha}(x)(\eta w^j)\partial_\alpha(\eta w^i)dx+\int_{\Omega_{s}(z')}B_{ij}^{\alpha}(x)(\partial_\alpha \eta w^j)(\eta w^i)dx,
\end{align*}
and
\begin{align*}
&\int_{\Omega_{s}(z')}C_{ij}^{\beta}(x)\partial_\beta w^j\eta^2w^idx\\
=&\,\int_{\Omega_{s}(z')}C_{ij}^{\beta}(x)\partial_\beta(\eta w^j)(\eta w^i)dx-\int_{\Omega_{s}(z')}C_{ij}^{\beta}(x)(\partial_\beta \eta w^j)(\eta w^i)dx,
\end{align*}
it follows from the weak ellipticity condition (\ref{eq1.3}) and the Cauchy inequality that
\begin{align*}&\lambda\int_{\Omega_{s}(z')}|\nabla(\eta w)|^2dx\leq\int_{\Omega_{s}(z')}A_{ij}^{\alpha\beta}(x)\partial_\beta(\eta w^j)\partial_\alpha(\eta w^i)dx\\
=&\,\int_{\Omega_{s}(z')}A_{ij}^{\alpha\beta}(x)(\partial_\beta\eta w^j)\partial_\alpha(\eta w^i)dx
-\int_{\Omega_{s}(z')}A_{ij}^{\alpha\beta}(x)\partial_\beta(\eta w^j)(\partial_\alpha\eta w^i)dx\\
&+\int_{\Omega_{s}(z')}A_{ij}^{\alpha\beta}(x)(\partial_\beta\eta w^j)(\partial_\alpha\eta w^i)dx
-\int_{\Omega_{s}(z')}B_{ij}^{\alpha}(x)(\eta w^j)\partial_\alpha(\eta w^i)dx\\
&-\int_{\Omega_{s}(z')}B_{ij}^{\alpha}(x)(\partial_\alpha \eta w^j)(\eta w^i)dx+\int_{\Omega_{s}(z')}C_{ij}^{\beta}(x)\partial_\beta(\eta w^j)(\eta w^i)dx\\
&-\int_{\Omega_{s}(z')}C_{ij}^{\beta}(x)(\partial_\beta \eta w^j)(\eta w^i)dx\nonumber +\int_{\Omega_{s}(z')}D_{ij}(x) (\eta w^j)(\eta w^i)dx-\int_{\Omega_{s}(z')}\eta^{2}f^{i}w^idx\\
\leq&\, \frac{\lambda}{2}\int_{\Omega_{s}(z')}|\nabla(\eta w)|^2dx+C\int_{\Omega_{s}(z')}|w\nabla\eta|^2dx+\frac{C}{(s-t)^2}\int_{\Omega_s(x_0)}|w|^2dx \\
&+(s-t)^2\int_{\Omega_s(x_0)}|f|^2dx\\
\leq&\, \frac{\lambda}{2}\int_{\Omega_{s}(z')}|\nabla(\eta w)|^2dx+\frac{C}{(s-t)^2}\int_{\Omega_{s}(z')}|w|^2dx+(s-t)^2\int_{\Omega_{s}(z')}|f|^2dx.
\end{align*}
So we obtain an iteration formula as follows:
\begin{align}\label{KTA001}
\int_{\widehat{\Omega}_t(z)}|\nabla w|^2dx\leq\frac{C}{(s-t)^2}\int_{\Omega_{s}(z')}|w|^2dx +C(s-t)^2\int_{\Omega_{s}(z')}|f|^2dx.
\end{align}

For $|z'|\leq R_{0}$, $\delta<s\leq\vartheta(\kappa_{1},\kappa_{3})\delta^{1/m}$, $\vartheta(\kappa_{1},\kappa_{3})=\frac{1}{2^{m+1}\kappa_{3}\max\{1,\kappa_{1}^{1/m-1}\}}$, it follows from conditions ({\bf{A1}}) and ({\bf{A2}}) that for  $(x',x_{d})\in\Omega_{s}(z')$,
\begin{align}\label{KHW01}
|\delta(x')-\delta(z')|\leq&|h_{1}(x')-h_{1}(z')|+|h_{2}(x')-h_{2}(z')|\notag\\
\leq&(|\nabla_{x'}h_{1}(x'_{\theta_{1}})|+|\nabla_{x'}h_{2}(x'_{\theta_{2}})|)|x'-z'|\notag\\
\leq&\kappa_{3}|x'-z'|(|x'_{\theta_{1}}|^{m-1}+|x'_{\theta_{2}}|^{m-1})\notag\\
\leq&2^{m-1}\kappa_{3}s(s^{m-1}+|z'|^{m-1})\notag\\
\leq&\frac{\delta(z')}{2}.
\end{align}
Then, we get
\begin{align*}
\frac{1}{2}\delta(z')\leq\delta(x')\leq\frac{3}{2}\delta(z'),\quad\mathrm{in}\;\Omega_{s}(z').
\end{align*}
This, together with the fact that $w=0$ on $\Gamma^{\pm}_{2R_{0}}$, yields that
\begin{align}\label{ADE007}
\int_{\Omega_{s}(z')}|w|^{2}\leq C\delta^{2}\int_{\Omega_{s}(z')}|\nabla w|^{2},
\end{align}
and, in light of \eqref{KHQ010},
\begin{align}\label{ADE006}
\int_{\Omega_{s}(z')}|f|^2dx\leq& C\Theta^{2}(\varphi,\psi)(z')\int_{\Omega_{s}(z')}\frac{1}{\delta^{2}}dx+C(\|\varphi\|^{2}_{C^{2}}+\|\psi\|^{2}_{C^{2}})\int_{\Omega_{s}(z')}\frac{|x'-z'|^{2}}{\delta^{2}}dx\notag\\
&+Cs^{n-1}\delta(\|\varphi\|^{2}_{C^{2}}+\|\psi\|^{2}_{C^{2}})\notag\\
\leq&C\Theta^{2}(\varphi,\psi)(z')s^{n-1}\delta^{-1}+Cs^{n+1}\delta^{-1}(\|\varphi\|^{2}_{C^{2}}+\|\psi\|^{2}_{C^{2}}).
\end{align}
Denote
\begin{align*}
F(t):=\int_{\Omega_{t}(z')}|\nabla w|^{2}.
\end{align*}
Then we know from (\ref{ADE007})--(\ref{ADE006}) that
\begin{align}\label{ADE008}
F(t)\leq &\left(\frac{c\delta}{s-t}\right)^2F(s)\notag\\
&+C(s-t)^{2}s^{n-1}\delta^{-1}\big(\Theta^{2}(\varphi,\psi)(z')+s^{2}(\|\varphi\|^{2}_{C^{2}}+\|\psi\|^{2}_{C^{2}})\big),
\end{align}
where $c$ and $C$ are universal constants.

Choose $k=\left[\frac{\vartheta(\kappa_{1},\kappa_{3})}{4c\delta^{1-1/m}}\right]+1$ and $t_{i}=\delta+2ci\delta,\;i=0,1,2,...,k$. Then, (\ref{ADE008}), together with $s=t_{i+1}$ and $t=t_{i}$, reads that
$$F(t_{i})\leq\frac{1}{4}F(t_{i+1})+C(i+1)^{n+1}\delta^{n}\big(\Theta^{2}(\varphi,\psi)(z')+\delta^{2}(\|\varphi\|^{2}_{C^{2}}+\|\psi\|^{2}_{C^{2}})\big).$$
In view of the fact that $\varphi\neq0$ on $\Gamma^{+}_{2R_{0}}$ or $\psi\neq0$ on $\Gamma^{-}_{2R_{0}}$, it follows from $k$ iterations and (\ref{lem2.2equ}) that for a sufficiently small $\varepsilon>0$,
\begin{align*}
F(t_{0})\leq C\delta^{n}\big(\Theta^{2}(\varphi,\psi)(z')+\delta^{2}(\|\varphi\|^{2}_{C^{2}}+\|\psi\|^{2}_{C^{2}})\big).
\end{align*}
\end{proof}

\begin{proof}[The proof of Theorem \ref{thm1.1}]
Making a change of variables in $\Omega_{\delta}(z')$ as follows:
\begin{align*}
\begin{cases}
x'-z'=\delta y',\\
x_{n}=\delta y_{n},
\end{cases}
\end{align*}
we turn $\Omega_{\delta}(z')$ into $Q_{1}$, where, for $0<r\leq 1$,
\begin{align*}
Q_{r}=\left\{y\in\mathbb{R}^{n}\,\Big|\,\frac{1}{\delta}h_{2}(\delta y'+z')<y_{n}<\frac{\varepsilon}{\delta}+\frac{1}{\delta}h_{1}(\delta y'+z'),\;|y'|<r\right\},
\end{align*}
with its top and bottom boundaries denoted, respectively, by
\begin{align*}
\widehat{\Gamma}^{+}_{r}=&\left\{y\in\mathbb{R}^{n}\,\Big|\,y_{n}=\frac{\varepsilon}{\delta}+\frac{1}{\delta}h_{1}(\delta y'+z'),\;|y'|<r\right\},
\end{align*}
and
\begin{align*}
\widehat{\Gamma}^{-}_{r}=&\left\{y\in\mathbb{R}^{n}\,\Big|\,y_{n}=\frac{1}{\delta}h_{2}(\delta y'+z'),\;|y'|<r\right\}.
\end{align*}
Similar to \eqref{KHW01}, we obtain that for $x\in\Omega_{\delta}(z')$,
\begin{align*}
|\delta(x')-\delta(z')|
\leq&2^{m-1}\kappa_{3}\delta(\delta^{m-1}+|z'|^{m-1})\notag\\
\leq&2^{m}\kappa_{3}\max\{1,\kappa_{1}^{1/m-1}\}\delta^{2-1/m}.
\end{align*}
This implies that
\begin{align}\label{KHW03}
\left|\frac{\delta(x')}{\delta(z')}-1\right|\leq2^{m}\max\{1,\kappa_{1}^{1/m-1}\}\kappa_{2}^{1-1/m}\kappa_{3}R_{0}^{m-1},
\end{align}
which, in combination with the fact that $R_{0}$ is a small positive constant, reads that $Q_{1}$ is of nearly unit size as far as applications of Sobolev embedding theorems and classical $L^{p}$ estimates for elliptic systems are concerned.

Let \begin{align*}
\hat{w}_{l}(y', y_n)=w_{l}(\delta y'+z', \delta y_n).\end{align*}
 Therefore, $\hat{w}_{l}(y)$ solves
 \begin{align*}
\begin{cases}
  \partial_\alpha\left(\hat{A}_{ij}^{\alpha\beta}\partial_\beta \hat{w}_{l}^j+\hat{B}_{ij}^\alpha \hat{w}_{l}^j\right)+\hat{C}_{ij}^\beta \partial_\beta \hat{w}_{l}^j+\hat{D}_{ij}\hat{w}_{l}^j=\hat{f}_{l}^{i},\quad
&\hbox{in}\  Q_1,  \\
\hat{w}_{l}=0, \quad&\hbox{on} \ \widehat{\Gamma}_1^\pm,
\end{cases}
\end{align*}
where
\begin{equation}\label{hatABCD}
\begin{aligned}
\hat{A}(y)=&A(\delta y'+z', \delta y_n),\quad\ \ \hat{B}(y)=\delta\,B(\delta y'+z', \delta y_n), \\
\hat{C}(y)=&\delta\,C(\delta y'+z', \delta y_n),\quad \hat{D}(y)=\delta^{2}D(\delta y'+z', \delta y_n),
\end{aligned}
\end{equation}
and $\hat{f}_{l}^{i}:=-\partial_\alpha\left(\hat{A}_{ij}^{\alpha\beta}\partial_\beta \hat{u}_{l}^j+\hat{B}_{ij}^\alpha \hat{u}_{l}^j\right)-\hat{C}_{ij}^\beta \partial_\beta \hat{u}_{l}^j-\hat{D}_{ij}\hat{u}_{l}^j$.

In view of $\hat{w}=0$ on the upper and lower boundaries of $Q_1$, we have, by the Poincar\'{e} inequality, that
$$\|\hat{w}_{l}\|_{H^1(Q_1)}\leq C\|\nabla \hat{w}_{l}\|_{L^2(Q_1)}.$$
 Using the Sobolev embedding theorem and classical $W^{2, p}$ estimates for elliptic systems, 
  we have, for some $p>n$,
 $$\|\nabla \hat{w}_{l}\|_{L^\infty(Q_{1/2})}\leq C\|\hat{w}_{l}\|_{W^{2, p}(Q_{1/2})}\leq C\left(\|\nabla \hat{w}_{l}\|_{L^2(Q_1)}+\|\hat{f}_{l}\|_{L^\infty(Q_1)}\right).$$
Since
$$\|\nabla \hat{w}_{l}\|_{L^\infty(Q_{1/2})}=\delta\|\nabla w_{l}\|_{L^\infty(\widehat{\Omega}_{\delta/2}(z'))},\quad\|\nabla\hat{w}_{l}\|_{L^2(Q_1)}=\delta^{1-\frac{n}{2}}\|\nabla w_{l}\|_{L^2(\Omega_\delta(z'))},$$and$$\|\hat{f}_{l}\|_{L^\infty(Q_1)}=\delta^2\|f_{l}\|_{L^\infty(\Omega_\delta(z'))},$$
tracing back to $w$ through the transforms, we have
\begin{align}\label{AZG01}
\|\nabla w_{l}\|_{L^\infty(\Omega_{\delta/2}(z'))}\leq\frac{C}{\delta}\left(\delta^{1-\frac{n}{2}}\|\nabla w_{l}\|_{L^2(\Omega_\delta(z'))}+\delta^2\|f_{l}\|_{L^\infty(\Omega_\delta(z'))}\right).
\end{align}
Utilizing \eqref{KHQ010} and \eqref{KHW03}, we obtain that for $x\in\Omega_{\delta}(z')$,
\begin{align*}
\delta(z')|f_{l}(x)|\leq&C\Theta(\varphi,\psi)(z')+C\delta(z')\bigg(\frac{|x'-z'|}{\delta(z')}+1\bigg)\big(\|\varphi^{l}\|_{C^{2}(\Gamma^{+}_{2R_{0}})}+\|\psi^{l}\|_{C^{2}(\Gamma^{-}_{2R_{0}})}\big)\notag\\
\leq&C\Theta(\varphi,\psi)(z')+C\delta(z')\big(\|\varphi^{l}\|_{C^{2}(\Gamma^{+}_{2R_{0}})}+\|\psi^{l}\|_{C^{2}(\Gamma^{-}_{2R_{0}})}\big).
\end{align*}
This, together with \eqref{AVR001} and \eqref{AZG01}, yields that for $z\in\Omega_{R_{0}}$,
\begin{align*}
|\nabla w_{l}(z)|\leq& C\Theta(\varphi,\psi)(z')+C\delta(z')\big(\|\varphi^{l}\|_{C^{2}(\Gamma^{+}_{2R_{0}})}+\|\psi^{l}\|_{C^{2}(\Gamma^{-}_{2R_{0}})}\big).
\end{align*}
Therefore, in light of decompositions \eqref{split001} and \eqref{split002}, we derive
\begin{align*}
|\nabla(u-\bar{u})(z)|\leq& C\Theta(\varphi,\psi)(z')+C\delta(z')\big(\|\varphi\|_{C^{2}(\Gamma^{+}_{2R_{0}})}+\|\psi\|_{C^{2}(\Gamma^{-}_{2R_{0}})}\big).
\end{align*}
That is, Theorem \ref{thm1.1} is established.

\end{proof}

\section{Proof of Theorem \ref{thm003}}\label{sc3}
Similar to the proof of Lemma \ref{lem2.1} with a slight modification, we have
\begin{lemma}\label{lem2.111}
Assume as above. Let $u\in H^{1}(\Omega_{2R_{0}}; \mathbb{R}^N)$ be a weak solution of problem (\ref{eq1.1}), then
\begin{align}\label{lem2.2equu93789}
\|\nabla u\|_{L^{2}(\Omega_{R_{0}})}\leq C\|u\|_{L^2(\Omega_{2R_{0}})},
\end{align}
where $C$ depends on $n$, $\lambda$, $R_{0}$, $\kappa_1$, $\kappa_2$, $\kappa_{3}$ and $\kappa_{4}$, but not on $\varepsilon$.
\end{lemma}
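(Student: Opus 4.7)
The plan is to mimic the energy-type argument from Lemma \ref{lem2.1}, but simplified by the absence of any inhomogeneity: since $\varphi \equiv 0$ and $\psi \equiv 0$, one tests the weak formulation of \eqref{eqation01} directly against $u$ (no auxiliary function $\bar u$ is required and there is no forcing term $f$ on the right-hand side).

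First I would take the test function $\zeta = u \chi_{\Omega_{R_0}}$ (made rigorous by a limiting/approximation, or equivalently by integrating on $\Omega_{R_0}$ and using integration by parts on the lateral boundary). This produces
\begin{align*}
\int_{\Omega_{R_0}} A_{ij}^{\alpha\beta}(x)\,\partial_\beta u^j\,\partial_\alpha u^i\,dx
=&\; -\int_{\Omega_{R_0}} B_{ij}^\alpha(x)\,u^j\,\partial_\alpha u^i\,dx
+\int_{\Omega_{R_0}} C_{ij}^\beta(x)\,\partial_\beta u^j\,u^i\,dx \\
&+ \int_{\Omega_{R_0}} D_{ij}(x)\,u^j u^i\,dx
+ \int\limits_{\scriptstyle |x'|=R_0\atop\scriptstyle h_2(x')<x_n<\varepsilon+h_1(x')} \!\!\bigl(A_{ij}^{\alpha\beta}\partial_\beta u^j + B_{ij}^\alpha u^j\bigr)u^i\,\frac{x_\alpha}{R_0}\,dS.
\end{align*}
The contributions from $\Gamma_{R_0}^{\pm}$ drop out because $u$ vanishes there. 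Then the weak ellipticity condition \eqref{eq1.3} and Cauchy's inequality absorb the first three bulk terms into $\tfrac{\lambda}{4}\|\nabla u\|_{L^2(\Omega_{R_0})}^2 + C\|u\|_{L^2(\Omega_{R_0})}^2$, leaving only the lateral boundary integral to control.

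The main technical step, just as in Lemma \ref{lem2.1}, is to bound the surface integral over $\{|x'|=R_0\}$ by $\|u\|_{L^2(\Omega_{2R_0})}^2$. For this I would apply interior/boundary $W^{2,p}$ estimates for elliptic systems in the ``thick'' annular region $\Omega_{4R_0/3}\setminus\overline{\Omega_{2R_0/3}}$, which is of nearly unit size and stays away from the thin gap; together with the vanishing of $u$ on $\Gamma_{2R_0}^{\pm}$, this yields
\begin{align*}
\|\nabla u\|_{L^\infty(\Omega_{4R_0/3}\setminus\overline{\Omega_{2R_0/3}})} \le C\|u\|_{L^2(\Omega_{2R_0})},
\end{align*}
and then, since $\delta(x') \le C$ on this set and $u(x',h_2(x'))=0$, also $\|u\|_{L^\infty}\le C\|u\|_{L^2(\Omega_{2R_0})}$ there. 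Trace inequalities (or simply $L^\infty$ bounds multiplied by the surface measure) give
\begin{align*}
\int\limits_{\scriptstyle |x'|=R_0\atop\scriptstyle h_2(x')<x_n<\varepsilon+h_1(x')}\!\!\bigl(|u|^2+|\nabla u|^2\bigr)\,dS \le C\|u\|_{L^2(\Omega_{2R_0})}^2.
\end{align*}

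Putting these bounds together and absorbing $\tfrac{\lambda}{4}\|\nabla u\|_{L^2(\Omega_{R_0})}^2$ into the left-hand side produces \eqref{lem2.2equu93789}. The step I expect to require the most care is the $W^{2,p}$ estimate near the curved portions of $\Gamma^{\pm}$ in the annular region, but this is standard because on $\Omega_{2R_0}\setminus\overline{\Omega_{R_0/2}}$ the domain is of order one and the flattening of $\Gamma_{2R_0}^{\pm}$ by a smooth change of variables (using ({\bf A3})) reduces the estimate to the classical boundary $W^{2,p}$ theory for second-order elliptic systems with zero Dirichlet data, with constants depending only on the parameters listed.
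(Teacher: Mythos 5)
Your proposal is correct and coincides with the paper's intended argument: the paper proves this lemma only by the remark ``Similar to the proof of Lemma \ref{lem2.1} with a slight modification,'' and the modification is exactly what you carry out — test against $u$ itself, drop the forcing term and the $\varphi,\psi$ contributions, and control the lateral boundary integral by $W^{2,p}$ and $L^\infty$ bounds in the unit-size annular region where $u$ vanishes on $\Gamma^{\pm}_{2R_0}$.
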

\begin{proof}[The proof of Theorem \ref{thm003}]
Following the same argument as in \eqref{KTA001}, we have the iteration formula as follows: for $0<t<s<R_{0}$,
\begin{align}\label{zzwo01}
\int_{\Omega_{t}(z')}|\nabla u|^{2}dx\leq\frac{C}{(s-t)^{2}}\int_{\Omega_{s}(z')}|u|^{2}dx.
\end{align}
Analogously as before, we know that for $|z'|\leq R_{0}$, $\delta<s\leq\vartheta(\kappa_{1},\kappa_{3})\delta^{1/m}$, $\vartheta(\kappa_{1},\kappa_{3})=\frac{1}{2^{m+1}\kappa_{3}\max\{1,\kappa_{1}^{1/m-1}\}}$,
\begin{align*}
\frac{1}{2}\delta(z')\leq\delta(x')\leq\frac{3}{2}\delta(z'),\quad\mathrm{in}\;\Omega_{s}(z').
\end{align*}
Then combining the fact that $u=0$ on $\Gamma^{\pm}_{2R_{0}}$, we deduce
\begin{align*}
\int_{\Omega_{s}(z')}|u|^{2}\leq C\delta^{2}\int_{\Omega_{s}(z')}|\nabla u|^{2}.
\end{align*}
Then the iteration formula \eqref{zzwo01} becomes
\begin{align*}
\int_{\Omega_{t}(z')}|\nabla u|^{2}dx\leq\left(\frac{C\delta}{s-t}\right)^{2}\int_{\Omega_{s}(z')}|\nabla u|^{2}dx.
\end{align*}
Define
\begin{align*}
G(t):=\int_{\Omega_{t}(z')}|\nabla u|^{2}.
\end{align*}
Then we have
\begin{align}\label{ADE008001}
G(t)\leq &\left(\frac{c_{1}\delta}{s-t}\right)^2G(s),
\end{align}
where $c_{1}$ is a positive constant independent of $\varepsilon$.

Analogously as above, pick $k=\left[\frac{\vartheta(\kappa_{1},\kappa_{3})}{4c_{1}\delta^{1-1/m}}\right]+1$ and $t_{i}=\delta+2c_{1}i\delta,\;i=0,1,2,...,k$. Then, (\ref{ADE008001}), together with $s=t_{i+1}$ and $t=t_{i}$, yields that
$$G(t_{i})\leq\frac{1}{4}G(t_{i+1}).$$
After $k$ iterations, it follows from \eqref{lem2.2equu93789} that
\begin{align}\label{ADE010PSG91}
G(t_{0})=\int_{\Omega_{\delta}(z')}|\nabla u|^{2}\leq\frac{1}{4^{k}}G(t_{k})\leq Ce^{-\frac{1}{C\delta^{1-1/m}}}\|u\|^{2}_{L^{2}(\Omega_{2R_{0}})}.
\end{align}

Let
\begin{align*}
\hat{u}(y', y_n)=u(\delta y'+z', \delta y_n).
\end{align*}
Therefore, $\hat{u}(y)$ solves
\begin{align*}
\begin{cases}
\partial_\alpha\left(\hat{A}_{ij}^{\alpha\beta}\partial_\beta \hat{u}^j+\hat{B}_{ij}^\alpha \hat{u}^j\right)+\hat{C}_{ij}^\beta \partial_\beta \hat{u}^j+\hat{D}_{ij}\hat{u}^j=0,\quad
&\hbox{in}\  Q_1,  \\
\hat{u}=0, \quad&\hbox{on} \ \widehat{\Gamma}_1^\pm,
\end{cases}
\end{align*}
where the coefficients $\hat{A}(y)$, $\hat{B}(y)$, $\hat{C}(y)$ and $\hat{D}(y)$ are defined by \eqref{hatABCD}.

In view of $\hat{u}=0$ on $\widehat{\Gamma}_1^\pm$, by utilizing the Poincar\'{e} inequality, the Sobolev embedding theorem and classical $W^{2, p}$ estimates for elliptic systems again, we obtain that for some $p>n$,
$$\|\nabla \hat{u}\|_{L^\infty(Q_{1/2})}\leq C\|\hat{u}\|_{W^{2, p}(Q_{1/2})}\leq C\|\nabla \hat{u}\|_{L^2(Q_1)}.$$
Then back to $u$, it follows from \eqref{ADE010PSG91} that
\begin{align*}
\|\nabla u\|_{L^\infty(\Omega_{\delta/2}(z'))}\leq C\delta^{-\frac{n}{2}}\|\nabla u\|_{L^2(\Omega_\delta(z'))}\leq C\delta^{-\frac{n}{2}}e^{-\frac{1}{2C\delta^{1-1/m}}}\|u\|_{L^{2}(\Omega_{2R_{0}})},
\end{align*}
which implies that for $z\in\Omega_{R_{0}}$,
\begin{align*}
|\nabla u(z)|\leq C\delta^{-\frac{n}{2}}e^{-\frac{1}{C\delta^{1-1/m}}}\|u\|_{L^{2}(\Omega_{2R_{0}})}.
\end{align*}
\end{proof}

\section{Application in the Lam\'{e} systems}\label{sc56}
Recall that a system is called a system of elasticity if $N=n$, the coefficients satisfy
$$A_{ij}^{\alpha\beta}(x)=A_{ji}^{\beta\alpha}(x)=A_{\alpha\,j}^{i\beta}(x),$$
and the following ellipticity condition: for all $n\times{n}$ symmetric matrices $\xi_{\alpha}^{i}$,
\begin{align}\label{Symmetrypositive09}
\tau_{1}|\xi|\leq\,A_{ij}^{\alpha\beta}(x)\xi_{\alpha}^{i}\xi_{\beta}^{j}\leq\tau_{2}|\xi|^{2},\quad 0<\tau_{1}<\tau_{2}<\infty.
\end{align}
In particular, for $i, j, k, l=1,2,...,n$, we set
\begin{align}\label{ZWZL001}
A_{ij}^{\alpha\beta}=\lambda\delta_{i\alpha}\delta_{j\beta}+\mu(\delta_{i\beta}\delta_{\alpha j}+\delta_{ij}\delta_{\alpha\beta}),\quad B_{ij}^{\alpha}=C_{ij}^{\beta}=D_{ij}=0,
\end{align}
where $\lambda$ and $\mu$ are Lam\'{e} constants satisfying $\mu>0$ and $n\lambda+2\mu>0$, and $\chi_{ij}$ is the kronecker symbol: $\chi_{ij}=0$ for $i\neq j$, $\chi_{ij}=1$ for $i=j$. Thus $A_{ij}^{\alpha\beta}$ satisfies the ellipticity condition \eqref{Symmetrypositive09} with $\tau_{1}=\min\{2\mu,n\lambda+2\mu\}$ and $\tau_{2}=\max\{2\mu,n\lambda+2\mu\}$.

Then problem \eqref{eq1.1} becomes the boundary value problem for the Lam\'{e} systems as follows:
\begin{align}\label{zzw901}
\begin{cases}
\mu\Delta{u}+(\lambda+\mu)\nabla(\nabla\cdot{u})=0,\quad&
\hbox{in}\  \Omega_{2R_{0}},  \\
u=\varphi(x),\quad &\hbox{on}\ \Gamma_{2R_{0}}^+,\\
u=\psi(x), \quad&\hbox{on} \ \Gamma_{2R_{0}}^-.
\end{cases}
\end{align}
In view of \eqref{ZWZL001}, it follows from a straightforward computation that for $i=1,2,...,n$,
\begin{align}\label{znm001}
A_{ij}^{nn}=&
(\lambda+\mu)\chi_{in}\chi_{jn}+\mu\chi_{ij},\quad l=1,2,...,n,
\end{align}
and
\begin{align}\label{znm002}
\sum^{n-1}_{\alpha=1}A_{il}^{\alpha n}\partial_{\alpha}\delta+\sum^{n-1}_{\beta=1}A_{il}^{n\beta}\partial_{\beta}\delta=
\begin{cases}
(\lambda+\mu)\chi_{in}\partial_{l}\delta,&l=1,...,n-1,\\
(\lambda+\mu)\partial_{i}\delta,&l=n.
\end{cases}
\end{align}
In view of \eqref{znm001}, we see that the matrix $(A_{ij}^{nn})_{N\times N}$ satisfies the uniform elliptic condition \eqref{QDZAQ} with $\Lambda_{1}=\mu$ and $\Lambda_{2}=\lambda+2\mu$. Then substituting \eqref{znm001}--\eqref{znm002} into \eqref{ZZW666}, it follows from the Cramer's rule that
\begin{align}\label{znm003}
\mathcal{G}_{l}=\frac{\lambda+\mu}{\lambda+2\mu}(\varphi^{l}(x',\varepsilon+h_{1}(x'))-\psi^{l}(x',h_{2}(x')))\partial_{l}\delta\, e_{n},\quad l=1,...,n-1,
\end{align}
and
\begin{align}\label{znm005}
\mathcal{G}_{n}=\frac{\lambda+\mu}{\mu}(\varphi^{n}(x',\varepsilon+h_{1}(x'))-\psi^{n}(x',h_{2}(x')))\sum^{n-1}_{l=1}\partial_{l}\delta\,e_{l}.
\end{align}
Inserting \eqref{znm003}--\eqref{znm005} to \eqref{OPQ}, the leading term for the gradient of a solution to the Lam\'{e} systems is expressed as follows: for $(x',x_{n})\in\Omega_{2R_{0}}$,
\begin{align}\label{OKL01}
\bar{u}_{\lambda,\mu}(x',x_{n}):=&\varphi(x',\varepsilon+h_{1}(x'))\bar{v}+\psi(x',h_{2}(x'))(1-\bar{v})\notag\\
&+\frac{\lambda+\mu}{\mu}\mathfrak{r}(\bar{v})(\varphi^{n}(x',\varepsilon+h_{1}(x'))-\psi^{n}(x',h_{2}(x')))\sum^{n-1}_{l=1}\partial_{l}\delta\,e_{l}\notag\\
&+\frac{\lambda+\mu}{\lambda+2\mu}\mathfrak{r}(\bar{v})\sum^{n-1}_{l=1}\partial_{l}\delta(\varphi^{l}(x',\varepsilon+h_{1}(x'))-\psi^{l}(x',h_{2}(x')))\,e_{n}.
\end{align}
For $x\in\Omega_{R_{0}}$, define
\begin{align}\label{QWA01999}
\bar{\Theta}_{\delta}(\varphi,\psi)(x'):=&|\varphi(x',\varepsilon+h_{1}(x'))-\psi(x',h_{2}(x'))|\delta^{1-2/m}\notag\\
&+|\nabla_{x'}(\varphi(x',\varepsilon+h_{1}(x'))-\psi(x',h_{2}(x')))|.
\end{align}
A direct application of Theorem \ref{thm1.1} yields that
\begin{corollary}\label{coro001}
Assume that hypotheses $\rm{(}${\bf{A1}}$\rm{)}$--$\rm{(}${\bf{A3}}$\rm{)}$ hold, and $\varphi\neq0$ on $\Gamma^{+}_{2R_{0}}$ or $\psi\neq0$ on $\Gamma^{-}_{2R_{0}}$. Let $u\in H^{1}(\Omega_{2R_{0}}; \mathbb{R}^N)$ be a weak solution of problem \eqref{zzw901}. Then, for a sufficiently small $\varepsilon>0$, $x\in \Omega_{R_{0}}$,
\begin{align*}
\nabla u=&\nabla\bar{u}_{\lambda,\mu}+O(1)\Big(\bar{\Theta}_{\delta}(\varphi,\psi)(x')+\delta\big(\|\varphi\|_{C^{2}(\Gamma^{+}_{2R_{0}})}+\|\psi\|_{C^{2}(\Gamma^{-}_{2R_{0}})}\big)\Big),
\end{align*}
where the leading term $\bar{u}_{\lambda,\mu}$ is given by \eqref{OKL01} and the rest term $\bar{\Theta}_{\delta}(\varphi,\psi)(x')$ is defined by \eqref{QWA01999}.
\end{corollary}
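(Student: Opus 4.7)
My plan is to realise Corollary \ref{coro001} as a specialisation of Theorem \ref{thm1.1} to the concrete coefficient structure of the Lam\'{e} system \eqref{zzw901}. After checking the hypotheses, the work reduces to an explicit calculation of the correction vectors $\mathcal{G}_{l}$ by Cramer's rule and a mild tightening of the remainder estimate from Section \ref{sec_2}.

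First I would confirm that the hypotheses of Theorem \ref{thm1.1} hold. With $A^{\alpha\beta}_{ij}$ given by \eqref{ZWZL001} and $B,C,D\equiv 0$, the boundedness \eqref{eq1.2} and $C^{2}$-bound \eqref{ZKL987} are immediate since the coefficients are constants, while the weak ellipticity \eqref{eq1.3} follows from the standard Korn-type inequality using $\mu>0$ and $n\lambda+2\mu>0$. Computation \eqref{znm001} shows that $(A^{nn}_{ij})_{N\times N}$ is diagonal with entries $\mu$ (for $i<n$) or $\lambda+2\mu$ (for $i=n$), all strictly positive, so \eqref{QDZAQ} holds with $\Lambda_{1}=\mu$ and $\Lambda_{2}=\lambda+2\mu$.

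Second I would solve the linear system \eqref{ZZW666} for $\mathcal{G}_{l}$. By \eqref{znm002} the right-hand side of \eqref{ZZW666} is supported only on the $i=n$ slot when $l<n$ (with coefficient $(\lambda+\mu)\partial_{l}\delta$) and only on the components $i<n$ when $l=n$ (with coefficient $(\lambda+\mu)\partial_{i}\delta$). Inverting the diagonal matrix $(A^{nn}_{ij})$ is then trivial and yields \eqref{znm003}--\eqref{znm005} at once. Substituting these $\mathcal{G}_{l}$ into the definition \eqref{OPQ} of $\bar{u}$ produces precisely \eqref{OKL01} for $\bar{u}_{\lambda,\mu}$.

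Finally I would invoke Theorem \ref{thm1.1} with $\bar{u}=\bar{u}_{\lambda,\mu}$. To replace the coarser $\Theta$ of \eqref{QWA019} by the sharper $\bar{\Theta}_{\delta}$ of \eqref{QWA01999}, I would keep track, inside the proof of Theorem \ref{thm1.1}, of the contribution of $|\varphi-\psi|$ to $|f|$ at its actual strength $\delta^{-2/m}$ coming from $|\nabla_{x'}^{2}\bar{u}|$ (cf.\ \eqref{AZQ004} and the bound on $\mathrm{II}_{1}$), rather than collapsing it into the crude $\delta^{-1}$ upper bound \eqref{KHQ010}. Threading this refined pointwise bound through the integral $\int_{\Omega_{s}(z')}|f|^{2}dx\leq C|\nabla_{x'}(\varphi-\psi)|^{2}s^{n-1}\delta^{-1}+C|\varphi-\psi|^{2}s^{n-1}\delta^{1-4/m}+Cs^{n+1}\delta^{-1}(\|\varphi\|_{C^{2}}^{2}+\|\psi\|_{C^{2}}^{2})$ in the iteration of Lemma \ref{lem2.2}, and then through the $W^{2,p}$ rescaling argument at the end of Section \ref{sec_2}, gives the pointwise remainder in the stated $\bar{\Theta}_{\delta}$ form; note that for $m=2$ this simply reproduces $\Theta$. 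The main obstacle is purely bookkeeping: verifying that the $k$-step geometric iteration still closes with the improved $s^{n-1}\delta^{1-4/m}$ source term, and that the final $L^{\infty}$ rescaling picks up exactly the weight $\delta^{1-2/m}$ on $|\varphi-\psi|$. No analytic ingredient beyond what is already developed in Section \ref{sec_2} is needed.
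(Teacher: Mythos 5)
Your proposal is correct and follows essentially the same route as the paper: verify \eqref{QDZAQ} with $\Lambda_{1}=\mu$, $\Lambda_{2}=\lambda+2\mu$, compute $\mathcal{G}_{l}$ explicitly via Cramer's rule from \eqref{znm001}--\eqref{znm002}, substitute into \eqref{OPQ} to obtain \eqref{OKL01}, and apply Theorem \ref{thm1.1} with the sharpened source bound $|f_{l}^{i}|\leq C\bar{\Theta}_{\delta}(\varphi,\psi)(x')\delta^{-1}+C(\|\varphi^{l}\|_{C^{2}}+\|\psi^{l}\|_{C^{2}})$ threaded through the iteration and rescaling. One small imprecision: the weight $\delta^{-1}$ on $|\varphi-\psi|$ in \eqref{KHQ010} is not a ``crude collapse'' of the $\delta^{-2/m}$ contribution from $\mathrm{II}_{1}$ but is genuinely forced, for general coefficients, by the lower-order term $\mathrm{I}$ in \eqref{ZWPLN001}; the gain for the Lam\'{e} system comes precisely from $\mathrm{I}\equiv 0$ (constant $A$ and $B=C=D=0$), which leaves only the $\mathrm{II}_{1}$ contribution whose bookkeeping you carry out correctly.
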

\begin{remark}
We see from Corollary \ref{coro001} that for the Lam\'{e} systems \eqref{zzw901}, we can capture a smaller residual part $\bar{\Theta}_{\delta}(\varphi,\psi)(x')$ than $\Theta(\varphi,\psi)(x')$ defined in \eqref{QWA019} for general elliptic systems \eqref{eq1.1}, see also Theorem 3.1 of \cite{MZ2021} for detailed proof. In fact, for the Lam\'{e} systems, due to the fact that $\partial_{\alpha}A_{ij}^{\alpha\beta}=B_{ij}^{\alpha}=C_{ij}^{\beta}=D_{ij}=0$, then there is no lower order term appearing in \eqref{ZWPLN001} so that we obtain a more small upper bound estimate of $f_{l}^{i}$ than that of \eqref{KHQ010} as follows:
\begin{align*}
|f^{i}_{l}|\leq&C\bar{\Theta}_{\delta}(\varphi,\psi)(x')\delta^{-1}+C(\|\varphi^{l}\|_{C^{2}}+\|\psi^{l}\|_{C^{2}}).
\end{align*}

\end{remark}
\begin{remark}
For $l=1,2,...,n$, if we let $\varphi=e_{l}$ on $\Gamma^{+}_{2R_{0}}$ and $\psi=0$ on $\Gamma^{-}_{2R_{0}}$ for problem \eqref{zzw901} , then the leading term \eqref{OKL01} becomes
\begin{align*}
\bar{u}_{\lambda,\mu}=\big(\bar{u}_{\lambda,\mu}^{1},...,\bar{u}_{\lambda,\mu}^{n}\big),\quad\bar{u}_{\lambda,\mu}^{l}=&
\begin{cases}
\bar{v}\,e_{l}+\frac{\lambda+\mu}{\lambda+2\mu}\mathfrak{r}(\bar{v})\partial_{l}\delta\,e_{n},&l=1,...,n-1,\\
\bar{v}\,e_{n}+\frac{\lambda+\mu}{\mu}\mathfrak{r}(\bar{v})\sum_{i=1}^{n-1}\partial_{i}\delta\,e_{i},&l=n,
\end{cases}
\end{align*}
which was previously captured in \cite{LX2020}.
\end{remark}

\noindent{\bf{\large Acknowledgements.}} The author was partially supported by CPSF (2021M700358). This paper was initiated while the author was visiting the Bernoulli Institute for Mathematics, Computer Science and Artificial Intelligence at University of Groningen in Netherland from October 2019 to October 2020. The author thanks the hospitality and the stimulating environment.

\bibliographystyle{plain}

\def\cprime{$'$}

\end{document}